\documentclass[12pt]{amsart}
\usepackage{amsmath,amsfonts,amscd,amsthm,amssymb,latexsym}
\usepackage{xcolor}
\usepackage{amsthm}						
\usepackage{tabu}
\usepackage{enumerate}
\usepackage{adjustbox}
\usepackage{hyperref}

\font\teneufm=eufm10 \font\seveneufm=eufm7 \font\fiveeufm=eufm5
\newfam\frakturfam
\textfont\frakturfam=\teneufm \scriptfont\frakturfam=\seveneufm
\scriptscriptfont\frakturfam=\fiveeufm

\newtheorem{df}{Definition}
\newtheorem{lm}{Lemma}
\newtheorem{theor}{Theorem}
\newtheorem{co}{Corollary}
\newtheorem{rem}{Remark}
\newtheorem{ex}{Example}

\def\bee{\begin{eqnarray}}
	\def\bes{\begin{eqnarray*}}
		\def\eee{\end{eqnarray}}
	\def\ees{\end{eqnarray*}}

\def\Proof{{\sl Proof.}\ }

\title{Just-infinite Jordan  Banach algebras}
\author{V.~N.~Zhelyabin, A.~S.~Mamontov  }

\begin{document}
	
	\maketitle
	
	\newcommand{\Addresses}{{
			\bigskip\noindent
			\footnotesize
			Victor~Zhelyabin, \textsc{Sobolev Institute of Mathematics, Novosibirsk, Russia;}\\\nopagebreak
			\textit{E-mail address: } \texttt{vicnic@math.nsc.ru }
			\\\nopagebreak
			ORCID:\textsc{0000-0001-9371-6363}
			
			\medskip\noindent
			Andrey~Mamontov, \textsc{Sobolev Institute of Mathematics, Novosibirsk, Russia;}\\\nopagebreak
			\textit{E-mail address: } \texttt{andreysmamontov@gmail.com}
			\\\nopagebreak
			ORCID:\textsc{0000-0002-0324-5287}
			\medskip
	}}
	
	\parbox[c]{15cm}
	{\bf Abstract. \it
		By analogy with the well-established notions
		of just-infinite groups and 
		just-infinite  algebras, in particular $C^*$-algebras,
		we initiate a study of
		just-infinite $JB$-algebras, i.e.
		infinite dimensional $JB$-algebras for which
		all proper quotients are finite dimensional.
		We investigate the connections between
		a just-infinite $C^*$-algebra $A$ and
		its Jordan algebra $H(A,*)$ of self-adjoint elements.
		We also show that any just-infinite $JB$-algebra $J$ 
		either is a infinite-dimensional spin factor
		or there exists a $C^*$-algebra $A$ and
		just-infinite norm-closed real $*$-subalgebras 
		$A_1$ and $A_2$ of $A$ such that
		$H(A_1,*)\unlhd J \subseteq H(A_2,*).$
	}
	
	\vspace*{5mm}
	
	\noindent
	{\bf Mathematics Subject Classification (2010):} 
	46L05, 46H99, 46L70, 17C99, 17C65
	
	\medskip
	
	\noindent
	{\bf Key words:} 
	just-infinite group,
	just-infinite $C^*$-algebra, 
	$JB$-algebra, 
	$JC$-algebra, 
	spin factor.
	
	\section{\bf Introduction}
	\hspace*{\parindent}\vspace*{-10pt}
	
	Various finiteness conditions play an important role 
	in the structure theory of any class of rings.
	Naturally, the structural theory for some class of rings starts with the study of finite-dimensional algebras. Next, we move on to study algebras 
	that satisfy various finiteness conditions, 
	such as the minimality condition for ideals. 
	The theory of finite-dimensional algebras, as well as the theory of algebras satisfying the minimality condition for one-sided ideals or quadratic ideals 
	is well developed for both associative algebras 
	and Jordan algebras. 
	
	In \cite{FS} Farkas and Small study so-called 
	nearly finite-dimensional associative algebras, 
	also known as {\it just-infinite dimensional}, or 
	{\it just-infinite} for short.
	These are infinite-dimensional algebras 
	whose every non-zero ideal is of finite codimension. 
	An example of a just-infinite algebra is the ring of polynomials in one variable.
	
	The class of just-infinite algebras
	is sufficiently large. 
	As shown in \cite{PT}, 
	every finitely generated algebra can be
	homomorphically mapped onto a just-infinite algebra. 
	In \cite{FS}, a description of associative
	infinite-dimensional algebras with unity, 
	in which every nonzero one-sided ideal
	has finite codimension, was given.
	Such an algebra is either a division algebra or
	a finite module over its center, 
	which is a finite-dimensional algebra. 
	Just-infinite $\mathbb{N}$-graded associative algebras were studied in \cite{RRZh}. 
	In particular, that paper contains examples
	of finitely generated just-infinite associative algebras that are not PI-algebras, 
	and an example of a just-infinite associative algebra
	that is not Noetherian for one-sided ideals.
	
	It was proved in \cite{FarPen} that just-infinite associative algebras are prime. 
	This article also shows that 
	a just-infinite associative unital PI-algebra 
	is a finitely generated module over its center, 
	and that the center is itself a just-infinite algebra. 
	The research presented in \cite{FarPen} 
	was continued in \cite{BellFarPen}, 
	where it was proven that if 
	$K$ is an algebraically closed field and 
	$A$ is a non-PI right noetherian 
	just-infinite $K$-algebra 
	then $A$ is stably just-infinite. 
	A just-infinite $k$-algebra $A$ is called 
	stably just-infinite 
	if $A \otimes_k K$ is just infinite
	over $K$, for any field extension $K/k$.
	
	In \cite{ZhP}, 
	just-infinite Jordan algebras were studied. 
	It was proven that such algebras
	are prime and non-degenerate.
	It was also proven in \cite{ZhP} 
	that a just-infinite Jordan PI-algebra with unity
	is either a finite module over its just finite center, 
	or is a central order in an algebra of 
	a non-degenerate symmetric bilinear form,
	or is an Albert ring. 
	In \cite{ZhPan}, 
	the connections between just-infinite 
	associative superalgebras and
	adjoint Jordan superalgebras were studied. 
	It was shown that if an associative superalgebra is 
	just-infinite, then its adjoint Jordan superalgebra is also just-infinite.
	
	It has already been noted that just-infinite associative Jordan algebras are prime. 
	For Lie algebras, this result is false.  
	In \cite{ShalZel}, 
	an example of a just-infinite Lie algebra
	that is not semi-prime, was constructed; 
	moreover, it has an abelian ideal of codimension one. Just-infinite $\mathbb{N}$-graded Lie algebras
	were studied in \cite{GMS}.
	
	There is a significant interest in so-called 
	just-infinite groups. 
	A group is said to be just-infinite if it is infinite
	and all its proper quotients are finite. 
	J.~Wilson's paper \cite{Wilson} is a pioneering systematic study in this area.
	In \cite{GupSid} N.~Gupta and S.~Sidki constructed, for each prime $p$, a 2-generator infinite $p$-group $E$ such that every finite $p$-group is
	embeddable in $E$ and, for $p$ odd, every proper quotient of $E$ is finite, i.e. $E$ is just-infinite.
	In \cite{GrigShum} R.~Grigorchuk and P.~Shumyatsky provide recent survey and new results on just infinite groups, and their relations to $C^*$-algebras.

	The work \cite{GMR} 
	of R.~Grigorchuk, M.~Musat, and M.~Rørdam 
	classifies just-infinite $C^*$-algebras
	in terms of their primitive ideal space. 
	In particular, this work discusses
	when $C^*$-algebras and 
	$^*$-algebras associated with a discrete group
	are just-infinite.
	
	The purpose of our research is to study Jordan-Banach algebras, also known as $JB$-algebras.  
	$JB$-algebras were introduced in \cite{AlShSt}. 
	Some important properties of these algebras
	were also studied there. 
	Examples of $JB$-algebras are Jordan algebras
	of self-adjoint elements of $C^*$-algebras. 
	Jordan subalgebras of self-adjoint operators
	on a complex Hilbert space that are norm-closed 
	are called $JC$-algebras. 
	These algebras are Jordan analogs
	of concrete $C^*$-algebras. 
	The structure of $JC$-algebras is quite well understood, and is close to that of $C^*$-algebras 
	(see \cite{EffSt,Stor,  Stor-I,Stor-II, Top, Top-I}).
	
	In this work, we prove that a Jordan algebra 
	of self-adjoint elements of a $C^*$-algebra is just-infinite if and only if 
	the underlying $C^*$-algebra is just-infinite.
	We also show that any just-infinite 
	$JB$-algebra is $JC$. 
	More precisely, this algebra is either an infinite-dimensional spin factor or a subalgebra of self-adjoint elements of a real associative just-infinite algebra. It also contains an ideal that is a just-infinite Jordan algebra of self-adjoint elements of a real algebra.
	
	\section{\bf Jordan algebras of self-adjoint elements of just-infinite $C^*$-algebras}
	\hspace*{\parindent}\vspace*{-10pt}
	
	Let $\mathbb{K}$ be
	the field of real numbers $\mathbb{R}$ 
	or the field of complex numbers $\mathbb{C}$.
	
	Let $A$ be an algebra over $\mathbb{K}$. 
	An involution on the algebra $A$ is 
	a conjugate-linear anti-automorphism of order two, usually denoted $x\mapsto x^*$. 
	In other words, 
	$$(x + y)^* = x^* + y^*,
	(xy)^* = y^*x^*, (\lambda x)^* = \overline{\lambda}x^*,(x^*)^* = x,$$ 
	for all $x, y \in  A$, $\lambda \in \mathbb{K}$. 
	An algebra over $\mathbb{K}$ equipped with an involution is called an involutive algebra ($ ^*$-algebra).
	
	A normed algebra is an algebra over $\mathbb{K}$
	being a normed space with a norm $\Vert\cdot \Vert$, 
	which is submultiplicative 
	($\Vert xy\Vert\leq \Vert x \Vert \Vert y\Vert$ 
	for all $x, y \in  A$).
	
	An involutive normed algebra $A$ 
	is a normed algebra equipped with an involution 
	$x\mapsto x^*$  
	such that 
	$\Vert x^*\Vert=\Vert x\Vert$ 
	for all $x \in A$.  
	Moreover, if additionally $A$ is a Banach algebra, 
	it is called an involutive Banach algebra.
	
	A $C^*$-algebra is an associative  involutive Banach algebra $A$ over $\mathbb{C}$ satisfying the $C^*$-axiom: 
	$$\Vert x^*x \Vert = \Vert x \Vert^2  \text{ for all } x \in A.$$
	
	Let $A$ be a $C^*$-algebra. 
	Note that $A$ may be considered
	as an algebra over $\mathbb{R}$. 
	Any $\mathbb{R}$-subalgebra of the $C^*$-algebra 
	will be called a real subalgebra.
	An element $a$ of $A$ is called self-adjoint if $a^*=a$. Let $H(A,*)$ be the set of self-adjoint elements of $A$.
	
	Note that an involutive normed algebra $A$,
	whose norm satisfies the C*-axiom, 
	does not contain one-sided nil-ideals. 
	Indeed, let $I$ be a left nil-ideal of $A$. 
	Then $x^*x$ is a nilpotent element for $x\in I$. 
	It is clear that $A$ does not contain any nonzero 
	self-adjoint nilpotent element. 
	Therefore, $x^*x=0$ and $x=0$.
	
	\begin{lm} \label{lm4} 
		Let $A$ be a $C^*$-algebra and
		let $I$ be a closed ideal of the $\mathbb{R}$-algebra $A$.
		Then $I$ is an ideal of the $C^*$-algebra $A$.
	\end{lm}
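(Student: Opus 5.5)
The plan is to show directly that a closed two-sided ideal $I$ of the real algebra $A$ is stable under multiplication by $i$ and under the involution. Both facts will come from one approximation device in the unitization $A^{1}$ of $A$. Once we know $iI\subseteq I$, the set $I$ is a complex subspace, hence an ideal of the complex algebra $A$. Together with $I^*\subseteq I$ and closedness, this makes $I$ a closed ideal of the $C^*$-algebra $A$.

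\textbf{Step 1: the approximating elements.} For $x\in A$ and $n\in\mathbb{N}$, set
$$e_n = x^*x\,(\tfrac1n + x^*x)^{-1}, \qquad f_n = xx^*\,(\tfrac1n+xx^*)^{-1}.$$
The inverses are taken in $A^{1}$, but $e_n,f_n\in A$ because each is a product of an element of $A$ with an element of $A^1$. By functional calculus on the positive element $x^*x$ and the $C^*$-identity,
$$\Vert x-xe_n\Vert^2=\Vert (1-e_n)x^*x(1-e_n)\Vert\le \sup_{t\ge 0}\frac{t\,n^{-2}}{(n^{-1}+t)^2}\le \frac1{4n}.$$
Hence $xe_n\to x$. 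Symmetrically, $\Vert x^*-x^*f_n\Vert^2=\Vert (1-f_n)xx^*(1-f_n)\Vert\le \frac{1}{4n}$, so $x^*f_n\to x^*$.

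\textbf{Step 2: $iI\subseteq I$.} Let $x\in I$. The element $ie_n$ lies in $A$, since $A$ is a complex algebra even though we treat it only as a real one. Because $I$ is a right ideal of the $\mathbb{R}$-algebra $A$, we get $x(ie_n)\in I$. Moreover $x(ie_n)=i\,xe_n\to ix$, and $I$ is closed, so $ix\in I$.

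\textbf{Step 3: $I^*\subseteq I$.} Let $x\in I$. Using $x^*\,g(xx^*)=g(x^*x)\,x^*$ for the rational function $g(t)=(\tfrac1n+t)^{-1}$, write
$$x^*f_n = x^*\cdot x\cdot\bigl[x^*(\tfrac1n+xx^*)^{-1}\bigr].$$
The bracket lies in $A$ by the same remark as in Step 1. Thus $x^*f_n=a\,x\,b$ with $a,b\in A$, and it belongs to $I$ because $I$ is two-sided. Since $x^*f_n\to x^*$ and $I$ is closed, $x^*\in I$.

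The main point needing care is Step 3: in general one cannot see directly that $x^*\in I$. The fix is to exhibit $x^*$ as a limit of elements of the form $axb$, which is where two-sidedness and closedness are genuinely used. The only analytic input is the elementary norm estimate in Step 1, obtained from the continuous functional calculus and the $C^*$-axiom.
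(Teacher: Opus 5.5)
Your proof is correct, but it takes a different route from the paper.

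\textbf{The paper's route} is algebraic. It forms the complex ideal $P=I+iI$ and observes that $P^2\subseteq I\cap iI$. It then cites Dixmier: the closed complex ideal $I\cap iI$ is self-adjoint, so $A/(I\cap iI)$ is again a $C^*$-algebra. Finally it uses that a $C^*$-algebra has no nonzero square-zero ideals to conclude $P\subseteq I\cap iI$, hence $I=iI$.

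\textbf{Your route} is analytic. The elements $e_n=x^*x(\tfrac1n+x^*x)^{-1}\in A$ satisfy $xe_n\to x$ by your functional-calculus estimate, so $ix=\lim x(ie_n)\in I$. The factorization $x^*f_n=x^*\,x\,[x^*(\tfrac1n+xx^*)^{-1}]$ then gives $I^*\subseteq I$.

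\textbf{What each buys.} Your argument is self-contained: it needs only functional calculus for a single positive element. The paper instead relies on the self-adjointness of closed ideals and on the quotient theorem for $C^*$-algebras, and both of those rest on approximate-unit arguments much like yours. Your Step 2 also uses only that $I$ is a closed right ideal, and the mirror estimate $f_nx\to x$ handles left ideals. So you actually show that closed one-sided ideals of the real algebra $A$ are complex, which the paper's proof does not address. The paper's argument is shorter given the cited facts, and it isolates the structural reason: the obstruction $P/(I\cap iI)$ is a nilpotent ideal, which semiprimeness of $C^*$-algebras rules out.

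Your Step 3 is not needed for the statement as posed. Once $I$ is known to be a closed complex ideal, its self-adjointness is automatic. The identity $x^*g(xx^*)=g(x^*x)x^*$ you quote is also not used in the factorization you actually write down. Both are harmless.
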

	
	\Proof 
	Let $I$ be a closed ideal of the $\mathbb{R}-$algebra $A$.
	Then $I\cap iI$ is an ideal of the $C^*$-algebra $A$. 
	This ideal is closed.
	
	We claim that $I$ is an ideal of the $C^*$-algebra $A$, that is, $iI\subseteq I$. 
	Consider the vector space $P=I+iI$ in the algebra $A$. 
	It is clear that $P$ is an ideal
	of the $C^*$-algebra $A$. We have
	$$P^2\subseteq P(I+iI)\subseteq PI+(iP)I\subseteq I.$$ Since $P^2=(iP)(iP)\subseteq iP^2$, then $P^2\subseteq iP^2\subseteq iI.$ 
	Consequently, $P^2\subseteq I\cap iI$.
	
	Since $I\cap iI$ is a closed ideal
	of the $C^*-$algebra $A$, 
	then $(I\cap iI)^*\subseteq I\cap iI$  
	(see \cite{Dixmier}[Proposition~1.8.2]). 
	Therefore, 
	the quotient $A/(I\cap iI)$ is a $C^*$-algebra. 
	From the inclusion $P^2\in I\cap iI$, 
	we can conclude that the image of $P$ 
	in the quotient $C^*$-algebra $A/(I\cap iI)$ 
	is nilpotent. 
	Therefore, $P\subseteq I\cap iI$. 
	Consequently, since $P \subseteq I$, 
	we have $P = I = iI$. 
	This means that $I$ is an ideal
	of the $C^*$-algebra $A$.
	\qed

	\begin{lm} \label{lm5} 
		A $C^*$-algebra $A$ is finite-dimensional
		if and only if
		$A$ is a finite-dimensional $\mathbb{R}-$algebra.
	\end{lm}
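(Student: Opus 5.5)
The statement is essentially a dimension count, so I will compare the real and complex dimensions of the underlying vector space of $A$ directly. A $C^*$-algebra $A$ is a vector space over $\mathbb{C}$, and restricting scalars to $\mathbb{R}\subseteq\mathbb{C}$ makes $A$ a vector space over $\mathbb{R}$. The multiplication is unchanged, so the $\mathbb{R}$-algebra $A$ is the same ring with fewer scalars. The claim then reduces to the standard identity
$$\dim_{\mathbb{R}} A = 2\dim_{\mathbb{C}} A,$$
valid whenever either side is finite.

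First suppose $A$ is finite-dimensional as a $C^*$-algebra, with a $\mathbb{C}$-basis $e_1,\dots,e_n$. I will check that $e_1,ie_1,\dots,e_n,ie_n$ is an $\mathbb{R}$-basis of $A$. These $2n$ vectors span over $\mathbb{R}$: any $\sum(\alpha_k+i\beta_k)e_k$ with $\alpha_k,\beta_k\in\mathbb{R}$ equals $\sum \alpha_k e_k+\sum\beta_k(ie_k)$. They are $\mathbb{R}$-independent: a real relation $\sum \alpha_k e_k+\sum \beta_k (ie_k)=0$ is the complex relation $\sum(\alpha_k+i\beta_k)e_k=0$, which forces $\alpha_k=\beta_k=0$. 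Hence $\dim_{\mathbb{R}}A=2n<\infty$.

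Conversely, suppose $A$ is finite-dimensional over $\mathbb{R}$. A finite $\mathbb{R}$-spanning set of $A$ also spans $A$ over $\mathbb{C}$, because $\mathbb{R}\subseteq\mathbb{C}$. So $\dim_{\mathbb{C}}A\le\dim_{\mathbb{R}}A<\infty$.

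There is no real obstacle here; the only point that needs care is verifying linear independence of the doubled basis, which is the short computation in the second paragraph. No earlier result of the paper is needed. Lemma~\ref{lm4} is not used for this statement; presumably it will be combined with the present lemma later, when passing between real and complex ideals and quotients.
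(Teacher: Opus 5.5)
Your proof is correct and follows essentially the same restriction-of-scalars argument as the paper: turn a $\mathbb{C}$-basis $e_1,\dots,e_n$ into a finite real spanning set, and for the converse note that a finite real spanning set also spans over $\mathbb{C}$ (the paper simply calls this direction clear). The only difference is that the paper first writes $e_k=f_k+ig_k$ with $f_k,g_k\in H(A,*)$ and spans by the $4n$ elements $f_k,g_k,if_k,ig_k$, whereas you use $e_k,ie_k$ directly, which avoids that detour and also gives the exact count $\dim_{\mathbb{R}}A=2\dim_{\mathbb{C}}A$.
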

	
	\Proof 
	Let $A$ be a finite-dimensional $C^*$-algebra. 
	Let $e_1,\ldots, e_n$ be a basis
	of the $C^*$-algebra $A$. 
	It is well known that 
	$$e_n=f_n+ig_n,\, f_n,g_n\in H(A,*).$$ 
	Then, for any $a\in A$, we have 
	$$a=\sum_{k=1}^n\alpha_k e_k,$$ where
	$\alpha_1,\ldots,\alpha_n\in \mathbb{C}.$
	
	Let $\alpha_k=\beta_k+i\gamma_k,$ where $\beta_k,\gamma_k\in \mathbb{R}$ and
	$k=1,\ldots , n$. 
	Then
	$$a=\sum_{k=1}^n(\beta_kf_k-\gamma_kg_k)+(\beta_k ig_k+\gamma_kif_k).$$ 
	That implies that the algebra $A$ as a vector space 
	over the field of real numbers $\mathbb{R}$ 
	is generated by the elements $f_1,g_1,if_1,ig_1,\ldots,f_n,g_n,if_n,ig_n$. Consequently, 
	$A$ is finite-dimensional over $\mathbb{R}.$
	
	It is clear that, 
	if $A$ is finite-dimensional over real numbers, 
	it must also be finite-dimensional over complex numbers.
	\qed
	
	\begin{df} 
		A Banach algebra $A$ over the field $\mathbb{K}$
		is said to be just-infinite 
		if it is infinite dimensional, 
		and for each non-zero closed two-sided ideal $I$ in $A$, the quotient $A/I$ is finite dimensional. 
		In particular, the $C^*$-algebra $A$ is said to be 
		just-infinite if it is infinite dimensional, 
		and for each non-zero closed two-sided ideal $I$ in $A$, the quotient $A/I$ is finite dimensional.
	\end{df}
	
	\begin{co} \label{co1} 
		A $C^*$-algebra is just-infinite 
		if and only if
		it is just-infinite over the real numbers.\end{co}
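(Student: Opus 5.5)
The plan is to read Corollary~\ref{co1} as a direct combination of Lemma~\ref{lm4} and Lemma~\ref{lm5}. The point is that the two notions of ``just-infinite'' compare the same objects: the same closed ideals, and quotients that are finite-dimensional in both senses simultaneously. Throughout, ``just-infinite over the real numbers'' means that $A$, viewed as a real Banach algebra with the same norm, is infinite-dimensional over $\mathbb{R}$ and has finite-dimensional real quotients by all nonzero closed two-sided $\mathbb{R}$-ideals.

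\textbf{Step 1: infinite-dimensionality.} By Lemma~\ref{lm5}, $A$ is infinite-dimensional over $\mathbb{C}$ if and only if it is infinite-dimensional over $\mathbb{R}$. More directly, $\dim_{\mathbb{R}}A=2\dim_{\mathbb{C}}A$.

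\textbf{Step 2: the closed ideals coincide.} Every closed ideal of the $C^*$-algebra $A$ is in particular a closed ideal of the $\mathbb{R}$-algebra $A$. Conversely, by Lemma~\ref{lm4}, every closed ideal $I$ of the $\mathbb{R}$-algebra $A$ satisfies $iI\subseteq I$, so it is a closed complex ideal. Hence the two families of nonzero closed two-sided ideals are the same.

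\textbf{Step 3: the quotients agree.} Fix a nonzero closed ideal $I$. The quotient $A/I$ is the same abelian group in both settings. Since $I$ is complex, $A/I$ carries a complex vector space structure whose restriction to $\mathbb{R}$ is the real quotient structure. Thus $\dim_{\mathbb{R}}A/I=2\dim_{\mathbb{C}}A/I$, and one is finite exactly when the other is. Alternatively, $A/I$ is a $C^*$-algebra (closed ideals are self-adjoint, as noted in the proof of Lemma~\ref{lm4}), so Lemma~\ref{lm5} applies to it directly.

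Combining Steps 1--3 gives both implications of Corollary~\ref{co1}. The only nontrivial ingredient is Step 2, specifically the direction that a real closed ideal is complex, and this is exactly Lemma~\ref{lm4}. I therefore expect no real obstacle beyond stating clearly that the norm topology, and hence closedness, is unchanged when $A$ is regarded over $\mathbb{R}$.
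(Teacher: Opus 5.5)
Your proposal is correct and follows essentially the same route as the paper: Lemma~\ref{lm5} gives infinite-dimensionality in both senses, Lemma~\ref{lm4} identifies the nonzero closed real ideals with the nonzero closed complex ideals, and the quotients are then compared by dimension. Your direct count $\dim_{\mathbb{R}}A/I=2\dim_{\mathbb{C}}A/I$ is a harmless shortcut for the paper's appeal to Lemma~\ref{lm5} applied to the $C^*$-algebra $A/I$.
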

	
	\Proof 
	Let $A$ be a just-infinite $C^*$-algebra. 
	We claim that $A$ is 
	a just-infinite algebra over the real numbers. 
	By Lemma \ref{lm5}, the algebra 
	$A$ is infinite-dimensional over $\mathbb{R}$. 
	Let $I$ be a non-zero closed ideal of the
	$\mathbb{R}$-algebra $A$. 
	By Lemma \ref{lm4}, 
	$I$ is a closed ideal of the $C^*$-algebra $A$. 
	Then the quotient algebra $A/I$ 
	is a finite-dimensional $C^*$-algebra. 
	Therefore, by Lemma \ref{lm5},
	$A/I$ is a finite-dimensional $\mathbb{R}$-algebra. Consequently,
	$A$ is a just-infinite algebra over the real numbers.
	
	Let $A$ be a just-infinite algebra over the real numbers. By Lemma \ref{lm5}, 
	the $C^*$-algebra $A$ is infinite-dimensional. 
	Let $I$ be a non-zero closed ideal 
	of the $C^*$-algebra $A$. 
	Then $I$ is a closed ideal of the 
	$\mathbb{R}$-algebra $A$. 
	Therefore, the quotient algebra $A/I$ 
	is finite-dimensional over $\mathbb{R}$. 
	By Lemma \ref{lm5}, 
	$A/I$ is a finite-dimensional $C^*$-algebra.
	Consequently, the $C^*$-algebra $A$ is just-infinite. \qed
	
	Let $A$ be a self-adjoint real subalgebra 
	of a $C^*$-algebra, that is, 
	$A^*\subseteq A$ and $A$ is an algebra over $\mathbb{R}$. 
	Let $J=H(A,*)$ be the set of self-adjoint elements of $A$.
	Define the multiplication $(\circ)$ on $J=H(A,* )$ 
	by the rule 
	$$x\circ y=\frac{1}{2}(xy+yx),\, x,y\in J.$$  
	Assume that $A$ is norm-closed 
	with respect to the $C^*$-norm of the $C^*$-algebra.
	Then $J$ is a $JB$-algebra (see \cite{AlShSt, Han-OlSt}), 
	that is, the Jordan algebra $J$ is 
	an algebra over the field of real numbers 
	equipped with a complete norm 
	satisfying the following requirements for $a,b\in J$:
	$$\Vert a\circ b \Vert\leq \Vert a \Vert \Vert b\Vert,$$
	$$\Vert a^2\Vert=\Vert a\Vert^2,$$
	$$\Vert a^2\Vert\leq \Vert a^2+b^2\Vert.$$
	
	\begin{ex} 
		Let $\mathcal{H}$ be a complex Hilbert space and 
		let $B(\mathcal{H})$ be the algebra of 
		bounded operators of $\mathcal{H}$. 
		Then $B(\mathcal{H})$ is a $C^*$-algebra 
		with the usual operator norm
		$$\Vert T\Vert=\sup_{\Vert x\Vert=1}\Vert Tx\Vert$$ 
		and an involution on $B(\mathcal{H})$, 
		which is the map $^*:T \mapsto T^*$ 
		sending an operator to its adjoint operator.
		Consider the vector space $B(\mathcal{H})_{sa}$ 
		of all self-adjoint operators in the algebra $B(\mathcal{H})$, that is,  $B(\mathcal{H})_{sa}=H(B(\mathcal{H}),*) $.
		Then $B(\mathcal{H})_{sa}$ is a $JB$-algebra. 
	\end{ex}
	
	In the following lemmas, we assume that 
	$A$ is a real subalgebra of a $C^*$-algebra, 
	which is self-adjoint and norm-closed 
	with respect to the $C^*$-norm. 
	Moreover, $J=H(A,*)$. 
	
	We note that $H(P,*)$ is nonzero 
	for any nonzero ideal $P$ of $A$. 
	Indeed, if $P$ is a nonzero ideal of  the algebra $A$, then $p^* p \neq 0$ for any nonzero element $p \in P$. Therefore, $H(P, *) \neq 0$,  because $p^*p\in P$.
	
	\begin{lm} \label{lm1}  
		Let $V$ be a norm-closed subspace of the algebra $J$ and let $B$ be a self-adjoint subspace of the algebra $A$ such that
		$B\cap J\subseteq V$. 
		Denote by $\overline{B}$ the closure of the
		subspace $B$ with respect to the norm of $A$. 
		Then $\overline{B}\cap J\subseteq V$.
	\end{lm}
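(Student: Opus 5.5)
Take $x\in\overline{B}\cap J$ and show $x\in V$ by approximating $x$ with self-adjoint elements of $B$; the tool is the symmetrization map $s(a)=\frac12(a+a^*)$.

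First pick a sequence $b_n\in B$ with $b_n\to x$ in the norm of $A$. Since $B$ is self-adjoint, $b_n^*\in B$, and $B$ is a real subspace, so
$$c_n=\tfrac12(b_n+b_n^*)\in B.$$
Each $c_n$ satisfies $c_n^*=c_n$, and $c_n\in A$ because $B\subseteq A$. Hence $c_n\in B\cap H(A,*)=B\cap J\subseteq V$.

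Next show that $c_n\to x$. The involution is isometric, $\Vert y^*\Vert=\Vert y\Vert$, so $b_n^*\to x^*=x$. Therefore
$$\Vert c_n-x\Vert\le \tfrac12\Vert b_n-x\Vert+\tfrac12\Vert b_n^*-x^*\Vert=\Vert b_n-x\Vert\to 0.$$

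Finally, $V$ is norm-closed in $J$, and $J$ carries the restriction of the norm of $A$. Since $c_n\in V$ and $c_n\to x\in J$, we conclude $x\in V$, which proves the lemma.

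There is no real obstacle here. The only points needing care are that $B$ is closed under the real combination $\frac12(b+b^*)$ (it is a real subspace closed under $*$), and that closedness of $V$ in $J$ matches closedness in $A$ (the norms coincide).
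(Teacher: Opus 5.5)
Your proof is correct and follows essentially the same route as the paper: approximate $x$ by $b_n\in B$, use continuity of the involution to get $\tfrac12(b_n+b_n^*)\to x$ with $\tfrac12(b_n+b_n^*)\in B\cap J\subseteq V$, and conclude from the closedness of $V$. Your explicit norm estimate and your remarks that $B$ is a real subspace and that $V$ is closed in $J$ simply make explicit what the paper leaves implicit.
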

	
	\Proof 
	Let $a\in \overline{B}\cap J$. 
	Since $a\in \overline{B}$, there exists a sequence
	$\{a_n\}$ of the elements of $B$ such that
	$a_n\rightarrow a$.
	Since the involution $*$ is a continuous map, we have $a_n^*\rightarrow a^*=a$.
	Therefore, $\frac{1}{2}(a_n+a_n^*)\rightarrow a$.
	Since $B$ is a self-adjoint ideal and $a_n\in B$, 
	then $a_n^*\in B$.
	Hence, $a_n+a_n^*\in B\cap J\subset V$. 
	Since $V$ is a closed subspace, we have $a\in V$.
	Therefore, $\overline{B}\cap J\subseteq V$.\qed
	
	\begin{lm} \label{lm2}  
		Let $\{B_\lambda\}_{\lambda\in \Lambda}$ 
		be a family of self-adjoint subspaces of $A$ 
		and let $V$ be a subspace of $J$ such that
		$B_\lambda \cap  J\subseteq V$ for all $\lambda\in\Lambda$. 
		Put $B=\sum_{\lambda}B_{\lambda}$. 
		Then $B\cap J\subseteq V$.\end{lm}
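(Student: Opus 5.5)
We need to show $B\cap J\subseteq V$. Elements of $B$ are finite sums $b=\sum_{k=1}^n b_k$ with $b_k\in B_{\lambda_k}$. The plan is to reduce each self-adjoint element of $B$ to a sum of self-adjoint elements of the individual $B_{\lambda}$, in the same way that the proof of Lemma \ref{lm1} symmetrizes an approximating sequence.

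Let $a\in B\cap J$ and write $a=\sum_{k=1}^n b_k$ with $b_k\in B_{\lambda_k}$. Since $a=a^*$ and the involution is additive, we get
$$a=\tfrac12(a+a^*)=\sum_{k=1}^n \tfrac12\,(b_k+b_k^*).$$
Each $B_{\lambda_k}$ is a self-adjoint subspace, so $b_k^*\in B_{\lambda_k}$. Since $B_{\lambda_k}$ is a real subspace, $\tfrac12(b_k+b_k^*)\in B_{\lambda_k}$. This element is self-adjoint, because $(b_k+b_k^*)^*=b_k^*+b_k$, so it lies in $B_{\lambda_k}\cap J\subseteq V$.

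Finally, $V$ is a subspace, hence closed under finite sums, so $a\in V$. No closure or topological argument is needed here, since only finite sums are involved. The one point that needs care is that the subspaces are real and the scalar $\tfrac12$ is real, which is what the setting guarantees. There is no genuine obstacle: the whole argument rests on symmetrizing each summand separately, and that works because every $B_\lambda$ is individually $*$-invariant.
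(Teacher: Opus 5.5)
Your proof is correct and follows essentially the same argument as the paper: write $a=\sum_k b_k$, symmetrize each summand using the $*$-invariance of the individual $B_{\lambda_k}$, note that each $\tfrac12(b_k+b_k^*)$ lies in $B_{\lambda_k}\cap J\subseteq V$, and sum in $V$. Your version states slightly more explicitly than the paper that each symmetrized summand lies in $B_{\lambda_k}\cap J$, but the approach is the same.
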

	
	\Proof  
	Let $a\in B\cap J$. 
	Then $a=\sum_{i=1}^{n}  a_{\lambda_i}$,
	where $a_{\lambda_i}\in B_{\lambda_i}.$ 
	Since $B_{\lambda_i}$ are self-adjoint, 
	we have $a_{\lambda_i}^*\in B_{\lambda_i}.$ 
	Hence,
	$$a=\frac{1}{2}(a+a^*)=\frac{1}{2}\sum_{i=1}^n(
	a_{\lambda_i}+a_{\lambda_i}^*)\in\sum_{i=1}^n B_{\lambda_i}\cap J\subseteq V, $$ since $a^*=a$. Consequently, $B\cap J\subseteq V$.
	\qed
	
	Denote by $A^\#$ the algebra with a joined unit.
	
	\begin{lm} \label{lm3} 
		Let $I$ be a closed ideal of $J$. 
		Then the $\mathbb{R}$-algebra $A$ 
		contains a self-adjoint normed-closed ideal $\overline{P}$ 
		such that $I=H(\overline{P},*)$. 
		In particular, 
		if $A$ is a $C^*$-algebra and $J=H(A,*)$ 
		then $A$ contains a closed ideal $\overline{P}$ 
		such that $I=H(\overline{P},*)$. \end{lm}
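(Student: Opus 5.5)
Given a closed Jordan ideal $I$ of $J=H(A,*)$, I would take $P$ to be the largest self-adjoint ideal of the $\mathbb{R}$-algebra $A$ with $H(P,*)\subseteq I$. Concretely, let $\{B_\lambda\}$ be the family of all self-adjoint two-sided ideals $B$ of $A$ with $B\cap J\subseteq I$, and put $P=\sum_\lambda B_\lambda$. Then $P$ is a self-adjoint ideal, and Lemma \ref{lm2} gives $P\cap J\subseteq I$. Since $I$ is norm-closed, Lemma \ref{lm1} gives $\overline{P}\cap J\subseteq I$. The closure $\overline{P}$ is again a self-adjoint ideal, by continuity of multiplication and involution. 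So $\overline{P}$ belongs to the family, and by maximality $\overline{P}=P$. It remains to prove the reverse inclusion $I\subseteq H(\overline{P},*)$.

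For this I would use the classical Herstein-type fact that a Jordan ideal of $H(A,*)$ generates a $*$-ideal of $A$ whose symmetric part lies back in the Jordan ideal, at least after passing to suitable products. The plan is to show that the self-adjoint ideal generated by certain elements of $I$ still has symmetric part inside $I$. Natural candidates are the ideals $B_x=\overline{A^\# x^3A^\#}+(\ldots)^*$, or the ideal generated by $\{U_x(J)\}$. The main identity to exploit is that for $x\in I$ and $a\in A$ the element $xax^*+ \ldots$ behaves well. Indeed, for $x\in I$ and $a\in J$,
\begin{equation*}
xax=2x\circ(x\circ a)-x^2\circ a\in I ,
\end{equation*}
and more generally $h\,x\,h'+h'xh\in I$ for $h,h'\in J$.

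Symmetric elements of $A^\# x A^\#$ are linear combinations of terms $axb+b^*xa^*$. Writing $a,b$ through symmetric and skew parts, and using the fact that products of symmetric elements of $A^\#$ span $A$ in the Jordan sense, one reduces to showing that $\{a\,x\,a^*\}$ and the symmetrized versions lie in $I$. For these one uses the Jordan triple product $U_x$ and the linearization $\{x,y,z\}$, each of which maps into $I$ whenever one argument lies in $I$.

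The main obstacle is the skew part of $A$. Elements $axb+b^*xa^*$ with $a,b$ skew need not be expressible by Jordan operations on $J$ applied to $x$ alone. First I would work with cubes or $U$-products, replacing $x$ by $U_x y$, where an extra factor allows the classical identity $k\,U_xy\,k=U_{kx}\ldots$ to be expanded using $k^2,\ kx+xk\in$ combinations of symmetric elements. This yields a self-adjoint ideal $Q$ with $Q\cap J\subseteq I$ containing all $x^3$ for $x\in I$, hence $Q\subseteq\overline{P}$.

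To finish, I would use the JB-algebra functional calculus. Every $x\in I$ is a norm limit of elements of the closed Jordan subalgebra generated by $x^3$ (odd continuous functions, e.g.\ cube roots, approximated by polynomials in $x^3$ without constant term). These lie in $H(\overline{P},*)$, which is closed, so $x\in H(\overline{P},*)$. Thus $I=H(\overline{P},*)$.

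The second statement follows from Lemma \ref{lm4}: a closed ideal of the $\mathbb{R}$-algebra underlying a $C^*$-algebra is a $C^*$-ideal.
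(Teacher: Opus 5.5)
Your outline has the same skeleton as the paper's proof. The one nontrivial algebraic input is the Herstein--McCrimmon fact $A^\#x^3A^\#\cap J\subseteq I$ for $x\in I$. Lemmas~\ref{lm1} and~\ref{lm2} and a step recovering $x$ from $x^3$ then finish. The packaging differs in two harmless ways:
\begin{enumerate}
\item The paper sets $P=\sum_{a\in I}\overline{A^\#a^3A^\#}$ and applies Lemma~\ref{lm1} twice. You take the largest self-adjoint ideal whose symmetric part lies in $I$, so closedness comes free from maximality.
\item For $I\subseteq\overline{P}$, the paper passes to the quotient $J/(J\cap\overline{P_a})$, which is a $JB$-algebra by \cite{AlShSt}, and uses that $\tilde a^3=0$ forces $\tilde a=0$. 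Your cube-root functional calculus in the ambient $C^*$-algebra (real polynomials in $x^3$ without constant term converging to $x$) is more elementary and avoids the quotient theorem. The paper's route stays intrinsic to $JB$-algebras.
\end{enumerate}

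What does not work as written is the paragraph meant to justify the key fact. The identity $k\,U_xy\,k=U_{kx}\ldots$ is never specified, and $kx\notin J$ in general. For skew $k$ and $x\in J$, the element $kx+xk$ is skew, not symmetric. Products of symmetric elements need not span a real $*$-algebra: for the quaternions, $H=\mathbb{R}$. So either cite the fact, as the paper does \cite{KMcC,ZHSlShShir}, or prove it; the proof is short. For skew $s$ and $x\in I$, the element $c=[s,x]$ lies in $J$, and
\[
[s,x^3]=2c\circ x^2+U_xc\in I,\qquad sx^3s=2(sx^2s)\circ x-U_x(sxs)-U_cx\in I .
\]
Hence for $a=h+s$ with $h\in J$ we get $ax^3+x^3a^*=2h\circ x^3+[s,x^3]\in I$. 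Using $sx^3=x^3s+[s,x^3]$,
\[
ax^3a^*=U_hx^3-sx^3s+x^3(sh)+(sh)^*x^3+2h\circ[s,x^3]\in I .
\]
Here the terms $x^3(sh)+(sh)^*x^3$ lie in $I$ by the previous sentence applied to $(sh)^*$. Linearizing $ax^3a^*\in I$ over $a\in A^\#$ gives $ax^3b+b^*x^3a^*\in I$ for all $a,b\in A^\#$. That is, $A^\#x^3A^\#\cap J\subseteq I$, which is exactly the ideal $Q$ your argument needs.
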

	
	\Proof
	Let $0\neq a\in I$. 
	Then $a^3\neq 0$ since $a$ is self-adjoint. 
	Put $P_a=A^\#a^3A^\#$.
	It is well known that $P_a\cap H(A,*)\subseteq I$ 
	(see \cite{KMcC,ZHSlShShir}). 
	It is clear that $P_a$ is a self-adjoint ideal of 
	the $\mathbb{R}$-algebra $A$. 
	
	Let $\overline{P_a}$ be the closure of the ideal
	$P_a$ with respect to the $C^*$-norm of the $\mathbb{R}$-algebra $A$. By Lemma \ref{lm1}, $\overline{P_a} \cap J\subseteq I$. Let $P=\sum_{a\in I}\overline{P_a}$.
	Then $P\cap J\subseteq I$ by Lemma \ref{lm2}.
	Let $\overline{P}$ be the closure of the ideal $P$ 
	in the algebra $A$.
	By Lemma \ref{lm1}, $\overline{P}\cap J\subseteq I$.
	
	Now we show that $I\subseteq P\subseteq \overline{P}$. 
	Note that $J\cap\overline{P_a}$ 
	is a norm-closed ideal of the $JB$-algebra $J$.  
	Consider the quotient algebra
	$J/J\cap \overline{P_a}$. 
	It is a $JB$-algebra by \cite{AlShSt} [Lemma 9.2]. 
	In the algebra
	$J/J\cap \overline{P_a}$ we have $\widetilde{a}^3=0$, 
	where $\widetilde{a}$ is the image
	of an element $a$ in the quotient algebra
	$J/J\cap \overline{P_a}$.
	Therefore, $\widetilde{a}=0$ since
	$\Vert\widetilde{a}^2\Vert=\Vert \widetilde{a} \Vert ^2$.
	Consequently, $a\in \overline{P_a}$.
	Since 
	$a\in \overline{P_a}\subseteq P\subseteq \overline{P}$ 
	for all $a\in I$ we have 
	$I\subseteq P\subseteq \overline{P}$.
	
	Thus, $I=H(\overline{P},* )$.
	
	Let $A$ be a $C^*$-algebra. 
	Then $A$ is an algebra over the real field $\mathbb{R}$.
	As we have seen above, $I=H(\overline{P},* )$. 
	By Lemma \ref{lm4}, $\overline{P}$ is a norm-closed ideal of the $C^*$-algebra $A$.  
	\qed
	
	Let $A$ be a normed algebra over $\mathbb{K}$. 
	We say that the algebra $A$ is {\it topologically prime} 
	if for any two nonzero closed ideals $P$ and $Q$ in $A$, their product $PQ$ is nonzero. 
	An arbitrary algebra is called prime if, 
	for any two nonzero ideals $P$ and $Q$ in the algebra, their product $PQ$ is nonzero. 
	Recall, that an ideal is called {\it trivial} 
	if $ab=0$ for all $a,b\in I$.
	
	\begin{lm} \label{lm6} 
		Let $A$ be a normed algebra over the field $\mathbb{K}$. 
		Then $A$ is prime 
		if and only if 
		$A$ is topologically prime. 
		In particular, a $C^*$-algebra ($JB$-algebra) is prime 
		if and only if 
		it is topologically prime. 
		Let $A$ be an algebra containing no trivial ideals. 
		Then for any two  ideals $P$ and $Q$ of $A$ 
		the equation $PQ=0$ implies that $P\cap Q=QP=0$.   \end{lm}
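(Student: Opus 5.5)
The plan is to compare ideals with their closures, using continuity of multiplication in a normed algebra. The direction ``prime $\Rightarrow$ topologically prime'' is immediate: closed ideals are in particular ideals, so if $PQ\neq 0$ for all nonzero ideals $P,Q$, this holds for nonzero closed ones.

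For the converse, suppose $A$ is topologically prime and $P,Q$ are nonzero ideals with $PQ=0$. I pass to the closures $\overline{P}$ and $\overline{Q}$ in the norm topology.
\begin{itemize}
\item The closure of an ideal in a normed algebra is again an ideal. This uses only continuity of $x\mapsto ax$ and $x\mapsto xa$, which follows from submultiplicativity (for a $JB$-algebra, from $\Vert a\circ b\Vert\leq\Vert a\Vert\Vert b\Vert$).
\item $\overline{P}\,\overline{Q}=0$. Take $x\in\overline{P}$, $y\in\overline{Q}$ and sequences $x_n\to x$, $y_n\to y$ with $x_n\in P$, $y_n\in Q$. Then $x_ny_n=0$, and joint continuity of multiplication, via $\Vert xy-x_ny_n\Vert\leq \Vert x-x_n\Vert\Vert y\Vert+\Vert x_n\Vert\Vert y-y_n\Vert$, gives $xy=0$.
\end{itemize}
Hence the product of the closed linear span $\overline{P}\,\overline{Q}$ is zero, while $\overline{P}\supseteq P\neq0$ and $\overline{Q}\neq 0$. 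This contradicts topological primeness. The same argument applies verbatim to $C^*$-algebras and to $JB$-algebras, since both are normed algebras with a submultiplicative norm; this gives the ``in particular'' clause.

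For the last assertion, let $A$ have no trivial ideals and let $P,Q$ be ideals with $PQ=0$.
\begin{itemize}
\item $P\cap Q=0$. The set $P\cap Q$ is an ideal, and $(P\cap Q)(P\cap Q)\subseteq PQ=0$, so it is a trivial ideal and therefore zero.
\item $QP=0$ in the associative case. The product $QP$ is an ideal (taking linear spans), and $(QP)(QP)=Q(PQ)P=0$. So $QP$ is trivial and hence zero.
\item $QP=0$ in general. If associativity is not available, note that $QP\subseteq P\cap Q$ because both are ideals, and $P\cap Q=0$ by the first step. This second route is cleaner and works for any algebra, including the Jordan case, where $QP=PQ$ anyway.
\end{itemize}

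The main point requiring care is the continuity step showing $\overline{P}\,\overline{Q}=0$; everything else is formal. The only subtlety is to use the joint-continuity estimate above, not merely separate continuity in each variable.
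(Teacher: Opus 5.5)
Your proof is correct and follows the paper's argument essentially step for step: pass to closures to get $\overline{P}\,\overline{Q}=0$, then use $(P\cap Q)^2\subseteq PQ$ and $QP\subseteq P\cap Q$. Your closing remark overstates one point, since separate continuity does suffice: as the paper does, first fix $q\in Q$ to get $\overline{P}Q=0$, then fix $p\in\overline{P}$ to get $\overline{P}\,\overline{Q}=0$.
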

	
	\Proof 
	It is easy to see that if $A$ is a prime algebra
	then $A$ is a topologically prime algebra.
	
	Let $P$ and $Q$ be non-zero ideals of $A$. 
	Assume that $A$ is topologically prime and $PQ=0$.  
	Let $\overline{P}$ be the closure of the ideal $P$. 
	It is clear that $\overline{P}Q=0$. 
	Similarly,  $\overline{P}\cdot\overline{Q}=0$. 
	Therefore, we get
	$\overline{P}=0$ or $\overline{Q}=0$. 
	Consequently, $P=0$ or $Q=0$, i.e $A$ is a prime algebra.
	
	Assume that $A$ contains no trivial ideals. 
	Let $PQ=0$. 
	Then the ideal $P\cap Q$ is trivial 
	because $(P\cap Q)^2\subseteq PQ$.
	Therefore, $P\cap Q=0$. 
	Since $QP\subseteq P\cap Q$, we get $QP=0$.\qed
	
	Let $A$ be an arbitrary associative algebra. 
	Denote by $[x,y]=xy-yx$ 
	the commutator of the elements $x,y$. 
	Then for any elements $a,b,c\in A$, 
	the following identities hold:
	
	\bee \label{Qcomm}[a,b]^2=\frac{1}{2}[a,[a,b^2]]-b\circ [a,[a,b]];\eee
	
	\bee \label{Tcomm}  [a\circ b,c]+[b\circ c,a]+[c\circ a,b]=0. \eee
	
	In each Jordan algebra $J$ the following identity holds:
	
	\bee
	\label{Jord-Id}
	[(xy)z]t+[(yt)z]x+[(tx)z]y=(xy)(zt)+(xt)(yz)+(xz)(yt).\eee
	
	Denote by $(x,y,z)=(xy)z-x(yz)$ the associator of 
	the elements $x,y,z$ in $J$. 
	Recall that the center of a Jordan algebra $J$ is 
	$$Z(J)=\{z\in J \mid (z,a,b)= 0 \text{ for all } a,b\in J  \}.$$
	
	The involution $j$ of an associative algebra over arbitrary field is an anti-isomorphism of order two 
	that fixes elements of the ground field. 
	Meanwhile, an involution conjugates complex scalars in  an involutive algebra.
	
	\begin{lm} \label{lm6-II} 
		Let $A$ be an associative algebra over the field $\mathbb{R}$ 
		with an involution $j$ such that the Jordan algebra
		$J=H(A,j)$ is a Jordan normed algebra whose norm satisfies $\Vert a^2\Vert =\Vert a\Vert^2 $ for all $a\in J$. 
		Assume that the identity $1$ of $J$ 
		is a sum of central orthogonal idempotents $e_1,\ldots,e_n$. Then 
		$1, e_1,\ldots,e_n$ are in the center of the algebra $A$. \end{lm}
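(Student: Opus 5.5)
The plan is to prove directly that any idempotent $e\in J$ lying in the Jordan center $Z(J)$ commutes with every element of $A$. This applies to each $e_i$. It also applies to $1$: since $1\circ x=x$ for all $x\in J$, the element $1$ is an idempotent of $Z(J)$, and anyway $1=e_1+\cdots+e_n$ is a sum of elements that will already be central in $A$. The argument is purely associative computation inside $A$; I do not expect to need the norm condition $\Vert a^2\Vert=\Vert a\Vert^2$. If some step did require excluding nilpotents, that condition would supply it, since it forces $J$ to have no nonzero nilpotent elements.

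\textbf{Step 1 (symmetric elements).} Let $x\in J$. From $(e,e,x)=0$ we get $e\circ(e\circ x)=e^2\circ x=e\circ x$. Writing $e\circ x=\frac12(ex+xe)$ gives
$$e\circ(e\circ x)=\tfrac14(ex+xe)+\tfrac12\,exe .$$
Equating this with $\frac12(ex+xe)$ gives $2exe=ex+xe$. Multiplying this relation on the left by $e$ and using $e^2=e$ yields $2exe=ex+exe$, so $exe=ex$. Multiplying on the right by $e$ yields $exe=xe$ in the same way. Hence $ex=xe$ for all $x\in J$. These manipulations take place in $A$ and need no unit of $A$.

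\textbf{Step 2 (skew elements).} Let $k\in A$ with $k^j=-k$. Put $s=ek-ke$. Using $e^j=e$, we compute $s^j=k^je-ek^j=-ke+ek=s$, so $s\in J$. By Step 1, $es=se$, that is, $ek-eke=eke-ke$, or $ek+ke=2eke$. This has exactly the shape of the relation in Step 1, so multiplying by $e$ on the left and on the right again gives $ek=eke=ke$.

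\textbf{Step 3 (conclusion).} Since $\mathbb{R}$ has characteristic $0$, every $a\in A$ decomposes as $a=\frac12(a+a^j)+\frac12(a-a^j)$, a symmetric part plus a skew part. By Steps 1 and 2, $e$ commutes with both parts, hence $e$ lies in the center of $A$. Applying this to each $e_i$ and to $1$ completes the proof.

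The only step needing real care is Step 2, namely noticing that the commutator of a symmetric and a skew element is symmetric, so that Step 1 applies to it. Everything else is routine Peirce-type bookkeeping.
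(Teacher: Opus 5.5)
Your proof is correct, and it takes a genuinely different route from the paper's.

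The paper argues in two steps:
\begin{enumerate}
\item It writes $h\in J$ as $\sum_k h\circ e_k$ and uses (\ref{Tcomm}) to get $[h,e_l]=0$.
\item For a skew $s$ it notes $[e_k,s]\in J$, and combines (\ref{Qcomm}) with $[e_k,s^2]=0$ and $[e_k,[e_k,s]]=0$ to obtain $[e_k,s]^2=0$. Only then does it use $\Vert a^2\Vert=\Vert a\Vert^2$ to conclude $[e_k,s]=0$.
\end{enumerate}

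You instead use idempotency at both stages. From $L_e^2=L_e$ on $J$ you get $2exe=ex+xe$, and from $[e,[e,k]]=0$ you get the same relation for $k$. Multiplying by $e=e^2$ on either side then forces $ex=exe=xe$. In effect this is the fact that $\operatorname{ad}_e^3=\operatorname{ad}_e$ for an idempotent, so $\ker\operatorname{ad}_e^2=\ker\operatorname{ad}_e$.

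Your version therefore proves more: every idempotent of $Z(J)$ is central in $A$. It needs no norm hypothesis and no orthogonality, and it works over any field of characteristic $\neq 2$.

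The paper's skew step, by contrast, never uses idempotency. For any $z\in J$ that commutes in $A$ with all of $J$, the same computation shows $[z,s]$ is a symmetric element of square zero. So what does the work there is the absence of nilpotents in $J$, which the norm condition supplies.
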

	
	\Proof 
	Let $h\in J$. 
	Then $h=h_1+\ldots +h_n$, where $h_k=h\circ e_k$. 
	We have
	$h_k\circ e_k=h_k$ and $h_k\circ e_l=0$ for $k\neq l$.
	By (\ref{Tcomm}), 
	$$[h_k,e_l]=[h_k\circ e_k,e_l]=-[e_k\circ e_l,h_k]-[e_l\circ h_k,e_k].$$ 
	From here, we obtain that $[h_k,e_l]=0$ 
	for all $k,l=1,\ldots,n$.
	Therefore, $[h,e_k]=0$ 
	for all $h\in J$ and $k=1,\ldots,n$.
	
	Let $s\in A$ such that $s^j=-s$. 
	Then $$[e_k,s]^j=[s^j,e_k^j]=-[s,e_k]=[e_k,s].$$ Consequently, $[e_k,s]\in J$. 
	Then $[e_l,[e_k,s]]=0$ for all $k,l=1,\ldots,n$. 
	By (\ref{Qcomm}), 
	$$[e_k,s]^2=\frac{1}{2}[e_k,[e_k,s^2]]-s\circ [e_k,[e_k,s]].$$ 
	Note that $s^2\in J$. 
	Therefore, $[e_k,s^2]=0$. 
	Then $[e_k,[e_k,s^2]]=0$. 
	Consequently, $[e_k,s]^2=0$. 
	Since $[e_k,s]\in J$ and the equality $\Vert a^2\Vert =\Vert a\Vert^2$ holds, then $[e_k,s]=0$.
	
	For any $a\in A$ we have $a=h+s$, where 
	$h^j=h$, $s^j=-s$. 
	Therefore, $[e_k,a]=0$ for all $k=1,\ldots,n$.
	
	Thus, $1, e_1,\ldots,e_n$ are in the center 
	of the algebra $A$.
	\qed
	
	\begin{rem} \label{rm1}
		Let $A$ be a real associative involutive normed algebra 
		whose norm satisfies the $C^*$-axiom. 
		Assume that $1$ is the identity of $H(A,*)$. 
		Then $1$ is the identity of $A$.
	\end{rem}
	Indeed, let $a \in A$, and let $x = a - a \cdot 1$. 
	Then $x^*x\in J$. 
	Therefore, $1\circ x^*x=x^*x$. 
	On the other hand, by Lemma \ref{lm6-II},  
	$1\circ x^*x=0$.
	Consequently, $x^*x=0$. 
	Having $\Vert x\Vert^2=\Vert x^*x\Vert=0$, we get $x=0$.  
	Thus, $1$ is the identity $A$.

	A family $\{u_\alpha\}_{\alpha\in \Lambda }$ of elements 
	in a $JB$-algebra $J$ is said to be 
	an {\it increasing approximate identity} for $J$ 
	indexed by a directed set $\Lambda$
	(see  \cite{AlShSt, Han-OlSt}) if
	
	(i) $\Vert u_\alpha \Vert\leq 1$ for all $\alpha$,
	
	(ii) $0\leq u_\alpha\leq u_\beta$ whenever $\alpha\leq \beta$,
	
	(iii) $\lim_\alpha \Vert a-a\circ u_\alpha\Vert=0$ for all $a\in J$. 
	
	It was proven in \cite{AlShSt,Han-OlSt} 
	that a norm closed ideal of $J$ 
	has an increasing approximate identity.
	
	The following Lemma holds.
	
	\begin{lm} \label{lm6-III} 
		Let $A$ be a real subalgebra of a $C^*$-algebra 
		which is self-adjoint and norm closed 
		with respect to the $C^*$-norm. 
		Let $P$ be a self-adjoint and norm closed ideal of $A$ 
		and let $\{u_\alpha\}_{\alpha\in \Lambda }$ 
		be an increasing approximate identity for $H(P,*)$. 
		Then $\{u_\alpha\}_{\alpha\in \Lambda }$ is 
		an increasing approximate identity for $P$,  that is, 
		$$\lim_\alpha \Vert a-a u_\alpha\Vert=0$$ for all $a\in P$.
	\end{lm}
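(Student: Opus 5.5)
The plan is a $C^*$-norm estimate: reduce $\Vert a-au_\alpha\Vert$ to the self-adjoint element $a^*a\in H(P,*)$, where the hypothesis on $\{u_\alpha\}$ applies. For $a\in P$ put $x_\alpha=a-au_\alpha=a(1-u_\alpha)$, computed in the unitization $A^\#$ (or inside the ambient $C^*$-algebra $B$ with unit adjoined). The $C^*$-axiom gives
$$\Vert x_\alpha\Vert^2=\Vert x_\alpha^*x_\alpha\Vert=\Vert (1-u_\alpha)a^*a(1-u_\alpha)\Vert .$$
Set $h=a^*a$. Since $P$ is a self-adjoint ideal, $h\in H(P,*)$, so $\Vert h-h\circ u_\alpha\Vert\to 0$. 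It remains to bound $(1-u_\alpha)h(1-u_\alpha)$ in terms of Jordan expressions in $h$ and $u_\alpha$.

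Expanding and using $u_\alpha h+hu_\alpha=2\,h\circ u_\alpha$ gives
$$(1-u_\alpha)h(1-u_\alpha)=h-2\,h\circ u_\alpha+u_\alpha h u_\alpha = (h-h\circ u_\alpha)-\bigl(h\circ u_\alpha-U_{u_\alpha}h\bigr),$$
where $U_{u}h=uhu=2u\circ(u\circ h)-u^2\circ h$ is the Jordan $U$-operator. The first term tends to $0$. For the second, write $h\circ u_\alpha-u_\alpha hu_\alpha=\tfrac12\bigl[(h-hu_\alpha)u_\alpha... \bigr]$ directly. Cleaner still is an order argument. Since $0\le u_\alpha\le 1$ in $B$, we have $0\le 1-u_\alpha\le 1$, hence $(1-u_\alpha)^2\le 1-u_\alpha$. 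By functional calculus in the commutative $C^*$-algebra generated by $u_\alpha$, this yields
$$0\le (1-u_\alpha)h(1-u_\alpha)=h^{1/2}\ldots$$
This is not directly order-comparable, so I would instead use the standard trick
$$\Vert x_\alpha\Vert^2=\Vert x_\alpha x_\alpha^*\Vert=\Vert a(1-u_\alpha)^2a^*\Vert\le \Vert a(1-u_\alpha)a^*\Vert=\Vert aa^*-au_\alpha a^*\Vert .$$
The inequality holds because $(1-u_\alpha)^2\le 1-u_\alpha$, conjugation by $a$ preserves order, and $0\le y\le z$ implies $\Vert y\Vert\le\Vert z\Vert$ in $B$. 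Here $u_\alpha$ is self-adjoint with $\Vert u_\alpha\Vert\le1$ and $u_\alpha\ge0$ in the JB-order. The JB-order agrees with the $C^*$-order on self-adjoint elements of $B$, since both are given by squares or spectra, so $0\le u_\alpha\le 1$ in $B^\#$.

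The quantity $a(1-u_\alpha)a^*$ is still not of the form $k-k\circ u_\alpha$. The main step is therefore to show $\Vert a(1-u_\alpha)a^*\Vert\to0$. I would apply the same bound once more: $a(1-u_\alpha)a^*=\bigl(a(1-u_\alpha)^{1/2}\bigr)\bigl(a(1-u_\alpha)^{1/2}\bigr)^*$, so its norm equals $\Vert (1-u_\alpha)^{1/2}a^*a(1-u_\alpha)^{1/2}\Vert$. This element is positive, and I would bound it by
$$\Vert (1-u_\alpha)^{1/2}h(1-u_\alpha)^{1/2}\Vert\le \Vert h\Vert^{1/2}\,\Vert h^{1/2}(1-u_\alpha)\Vert^{\!1/2}\cdot(\ldots),$$
which is awkward. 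The more robust route is to note that $\Vert h^{1/2}(1-u_\alpha)\Vert^2=\Vert (1-u_\alpha)h(1-u_\alpha)\Vert\le\Vert h(1-u_\alpha)\Vert\,\Vert 1-u_\alpha\Vert\le\Vert h-hu_\alpha\Vert$. Then $\Vert hu_\alpha-h\Vert$ is controlled by the Jordan data: $h-hu_\alpha=(h-h\circ u_\alpha)-\tfrac12[h,u_\alpha]$.

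The expected obstacle is the commutator term. I would handle it by proving the claim for all $b\in P$ via the chain
$$\Vert b-bu_\alpha\Vert^2\le\Vert bb^*-bu_\alpha b^*\Vert\le\Vert b\Vert\,\Vert b^*-u_\alpha b^*\Vert .$$
Combining this with the adjoint version gives the symmetric estimate
$$\Vert b-bu_\alpha\Vert^2+\Vert b-u_\alpha b\Vert^2\lesssim \Vert b\Vert\bigl(\Vert b-bu_\alpha\Vert+\Vert b-u_\alpha b\Vert\bigr)$$
at the level of $bb^*+b^*b$. This reduces everything to the self-adjoint element $k=bb^*+b^*b$ and the quantity $2k-ku_\alpha-u_\alpha k=2(k-k\circ u_\alpha)$, which tends to $0$ by hypothesis. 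Concretely, $\Vert b(1-u_\alpha)\Vert^2+\Vert(1-u_\alpha)b\Vert^2$ is dominated by $\Vert (1-u_\alpha)^{1/2}\ldots\Vert$, and I would use $(1-u)^2\le 1-u$ to reach $\Vert (1-u_\alpha)\circ k\Vert$ up to the constant $2$. Making this last order inequality precise is the step where I expect to spend the effort.
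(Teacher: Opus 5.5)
You start the same way as the paper's last step, with $\Vert a-au_\alpha\Vert^2=\Vert(1-u_\alpha)a^*a(1-u_\alpha)\Vert$. But the real content of the lemma is never established: you must derive $(1-u_\alpha)h(1-u_\alpha)\to 0$ for $h\in H(P,*)$ from the purely Jordan hypothesis $h\circ u_\alpha\to h$, and each of your routes stops exactly there.

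\begin{itemize}
\item In the expansion via $U_{u_\alpha}h$, the uncontrolled term is $u_\alpha^2\circ h$.
\item The bound $\Vert(1-u_\alpha)h(1-u_\alpha)\Vert\le\Vert h-hu_\alpha\Vert$ merely restates the self-adjoint case of the lemma; the commutator $[h,u_\alpha]$ is the whole difficulty.
\item The chain $\Vert b-bu_\alpha\Vert^2\le\Vert b\Vert\,\Vert b^*-u_\alpha b^*\Vert$ is vacuous. Since $\Vert b^*-u_\alpha b^*\Vert=\Vert(b-bu_\alpha)^*\Vert=\Vert b-bu_\alpha\Vert$, it only says $\Vert b-bu_\alpha\Vert\le\Vert b\Vert$.
\item Symmetrizing with $k=bb^*+b^*b$ returns you to needing $\Vert(1-u_\alpha)k(1-u_\alpha)\Vert\lesssim\Vert k-k\circ u_\alpha\Vert$, which is the original problem. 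The inequality $(1-u_\alpha)^2\le 1-u_\alpha$ does not by itself turn the product $(1-u_\alpha)k(1-u_\alpha)$ into the Jordan product $(1-u_\alpha)\circ k$, and you leave this step open.
\end{itemize}

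The paper supplies the missing idea algebraically. It writes
\[
(1-u_\alpha)h^2(1-u_\alpha)=2(h^2\circ(1-u_\alpha))\circ(1-u_\alpha)-h^2\circ(1-u_\alpha)^2 .
\]
It then applies the identity (\ref{Jord-Id}) with $x=y=h$, $z=t=1-u_\alpha$. This rewrites $h^2\circ(1-u_\alpha)^2$ as a sum of terms each containing a factor $h\circ(1-u_\alpha)$ or $h^2\circ(1-u_\alpha)$, so $U_{1-u_\alpha}(h^2)\to 0$. It passes to all of $H(P,*)$ via square roots and $h=h_+-h_-$.

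Your order-theoretic idea can also be rescued with one change: conjugate by the self-adjoint element $g=(a^*a)^{1/2}\in H(P,*)$ instead of by $a$. Since $\Vert a(1-u_\alpha)\Vert=\Vert g(1-u_\alpha)\Vert$, you get
\[
\Vert a-au_\alpha\Vert^2=\Vert g(1-u_\alpha)^2g\Vert\le\Vert g(1-u_\alpha)g\Vert
=\Vert 2g\circ(g-g\circ u_\alpha)-(g^2-g^2\circ u_\alpha)\Vert\to 0 .
\]
This works because $g(1-u_\alpha)g=U_g(1-u_\alpha)$ is a Jordan expression and both $g$ and $g^2$ lie in $H(P,*)$. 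You already had $h^{1/2}$ in hand. But you applied the $C^*$-identity on the side that produces $(1-u_\alpha)h(1-u_\alpha)$ rather than $h^{1/2}(1-u_\alpha)^2h^{1/2}$. Some such argument must be supplied; as written, the proposal does not prove the lemma.
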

	
	\Proof 
	We can assume that $J$ has the unity $1$. 
	Then, by Remark  \ref{rm1}, 
	this unity $1$ is the identity of the algebra $A$. 
	
	Let $\{u_\alpha\}_{\alpha\in \Lambda }$ 
	be an increasing approximate identity for $H(P,*)$. 
	First we show that
	$$\lim _\alpha \Vert (1-u_\alpha)h^2(1-u_\alpha)\Vert= 0$$ 
	for all $h\in H(P, *)$. 
	Indeed,
	$$(1-u_\alpha)h^2(1-u_\alpha)=2(h^2\circ (1-u_\alpha))\circ
	(1-u_\alpha)-h^2\circ (1-u_\alpha)^2.$$ 
	We have
	$$\Vert(h^2\circ (1-u_\alpha))\circ (1-u_\alpha)\Vert\leq 2\Vert(h^2\circ (1-u_\alpha)\Vert$$ 
	because $\Vert 1-u_\alpha\Vert\leq 2$. 
	Therefore,
	$\lim_\alpha \Vert(h^2\circ (1-u_\alpha))\circ (1-u_\alpha)\Vert=0.$ 
	In the Jordan algebra $H(A,*)$, using (\ref{Jord-Id}) 
	with $x=y=h$ and $z=t=(1-u_\alpha)$, 
	we obtain
	$$h^2\circ (1-u_\alpha)^2=-2(h\circ (1-u_\alpha))^2+
	2[(h\circ (1-u_\alpha))\circ (1-u_\alpha)]\circ h+[h^2\circ (1-u_\alpha)]\circ (1-u_\alpha).$$ 
	Therefore, $\lim_\alpha \Vert h^2\circ (1-u_\alpha))^2\Vert=0.$
	Consequently, 
	$\lim _\alpha \Vert(1-u_\alpha)h^2(1-u_\alpha)\Vert= 0$. 
	Then 
	$$\lim _\alpha \Vert(1-u_\alpha)h(1-u_\alpha)\Vert= 0$$ for any positive element $h\in H(P,*)$.
	
	For every element $h\in H(P,*)$, we have $h=h_+-h_-$, where $h_+$ and $h_-$ are positive elements of $H(P,*)$. Therefore, 
	$$\lim_\alpha \Vert(1-u_\alpha)a^*a(1-u_\alpha)\Vert= 0$$ 
	for all $a\in P$. 
	From here, we get
	$$0=\lim _\alpha \Vert(1-u_\alpha)a^*a(1-u_\alpha)\Vert=\lim _\alpha \Vert (a(1-u_\alpha))^*a(1-u_\alpha)\Vert=\lim _\alpha \Vert a(1-u_\alpha)\Vert^2. $$
	Thus $$\lim _\alpha \Vert a-au_\alpha\Vert=0$$ 
	for all $a\in P$. 
	\qed
	
	The following corollary is proven in the standard way
	
	\begin{co} \label{co2}  
		Let $A$ be a real subalgebra of a $C^*$-algebra 
		which is self-adjoint and norm closed
		with respect to the $C^*$-norm. 
		Let $P$ be a self-adjoint and norm closed ideal of $A$. Then the quotient algebra $A/P$ is a Banach $*$-algebra whose norm satisfies the $C^*$-axiom. 
	\end{co}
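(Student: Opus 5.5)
The proof follows the classical argument that the quotient of a $C^*$-algebra by a closed ideal is again a $C^*$-algebra, in the version that uses an approximate identity. Lemma~\ref{lm6-III} supplies the approximate identity. First, $A/P$ is a Banach $*$-algebra. It is a real Banach algebra under the quotient norm $\Vert a+P\Vert=\inf_{p\in P}\Vert a+p\Vert$ because $P$ is a norm closed ideal of $A$. Since $P^*=P$, the involution $(a+P)^*=a^*+P$ is well defined. It is isometric because $\Vert a^*+p\Vert=\Vert (a+p^*)^*\Vert=\Vert a+p^*\Vert$ and $p\mapsto p^*$ is a bijection of $P$.

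It remains to prove the $C^*$-axiom $\Vert (a+P)^*(a+P)\Vert=\Vert a+P\Vert^2$. The inequality $\le$ follows from submultiplicativity and the isometry of the involution. For the reverse inequality, take an increasing approximate identity $\{u_\alpha\}$ for the $JB$-algebra $H(P,*)$. By Lemma~\ref{lm6-III} it satisfies $\Vert a-au_\alpha\Vert\to 0$ for all $a\in P$. Applying this to $a^*$ and taking adjoints also gives $\Vert a-u_\alpha a\Vert\to0$ for $a\in P$. As in the proof of Lemma~\ref{lm6-III}, we may assume that $A$ has an identity $1$: either $J$ already has a unit, which is then the identity of $A$ by Remark~\ref{rm1}, or we adjoin one inside the unitization of the ambient $C^*$-algebra, which keeps the $C^*$-norm and leaves $P$ an ideal. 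Then $\Vert 1-u_\alpha\Vert\le 1$, since $0\le u_\alpha\le 1$ in the ambient $C^*$-algebra.

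The key step is the formula
$$\Vert a+P\Vert=\lim_\alpha \Vert a-au_\alpha\Vert \qquad (a\in A).$$
For $p\in P$ we have
$$\Vert a-au_\alpha\Vert\le \Vert (a+p)(1-u_\alpha)\Vert+\Vert p-pu_\alpha\Vert\le \Vert a+p\Vert+\Vert p-pu_\alpha\Vert.$$
Taking $\limsup_\alpha$ and then the infimum over $p$ gives $\limsup_\alpha\Vert a-au_\alpha\Vert\le\Vert a+P\Vert$. The opposite inequality is trivial because $au_\alpha\in P$.

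Now the $C^*$-identity in $A$ gives
$$\Vert a+P\Vert^2=\lim_\alpha\Vert a(1-u_\alpha)\Vert^2=\lim_\alpha\Vert (1-u_\alpha)a^*a(1-u_\alpha)\Vert\le \limsup_\alpha\Vert a^*a(1-u_\alpha)\Vert=\Vert a^*a+P\Vert,$$
where the last equality is the key formula applied to $a^*a$. This is the required inequality $\ge$.

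The main obstacle is justifying $\Vert 1-u_\alpha\Vert\le 1$ and the unitization step in the real (non-complex) setting. I expect to handle it by working inside the ambient complex $C^*$-algebra, whose unitization is again a $C^*$-algebra. There $0\le u_\alpha\le 1$, which holds because $\Vert u_\alpha\Vert\le1$ and $u_\alpha$ is positive, gives $\Vert 1-u_\alpha\Vert\le 1$ by the spectral theorem. The real algebra $A+\mathbb{R}1$ is still self-adjoint and norm closed and contains $P$ as an ideal.
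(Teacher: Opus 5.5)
Your argument is correct and matches what the paper means by ``proven in the standard way'' right after Lemma~\ref{lm6-III}: the approximate identity gives $\Vert a+P\Vert=\lim_\alpha\Vert a-au_\alpha\Vert$, and the $C^*$-identity in the unitized ambient $C^*$-algebra then yields $\Vert a+P\Vert^2\le\Vert a^*a+P\Vert$. You were right to use the sharper bound $\Vert 1-u_\alpha\Vert\le 1$ from $0\le u_\alpha\le 1$, since the cruder bound $\Vert 1-u_\alpha\Vert\le 2$ used inside the proof of Lemma~\ref{lm6-III} would only give $\Vert a+P\Vert^2\le 2\Vert a^*a+P\Vert$.
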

	
	\begin{theor} \label{th1} 
		Let $A$ be a real subalgebra of a $C^*$-algebra 
		which is self-adjoint and norm-closed 
		with respect to the $C^*$-norm. 
		Then the algebra $A$ is just-infinite 
		if and only if 
		$H(A, *)$ is just-infinite.\end{theor}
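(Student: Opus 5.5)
The plan is to transfer ideals and quotients back and forth between $A$ and $J=H(A,*)$, using Lemma \ref{lm3} to pass from Jordan ideals to associative ones and Corollary \ref{co2} to control the quotients. First we check dimensions. Every $a\in A$ decomposes as $a=h+s$ with $h^*=h$ and $s^*=-s$. If $J$ is finite-dimensional, then $A$ is finite-dimensional. To see this, note that $s^*s=-s^2\in J$, and the map $s\mapsto s^2$ does not immediately bound the dimension. So instead we use Corollary \ref{co2} together with the fact that a real $C^*$-type algebra whose self-adjoint part is finite-dimensional is finite-dimensional. For this we take a maximal commutative self-adjoint subalgebra argument, or more directly observe that the $\mathbb{R}$-span of $\{s_1s_2\}$ lies in $J+K$. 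The cleaner route is the following. A finite-dimensional $JB$-algebra $J$ has a unit, so $A$ is unital by Remark \ref{rm1}. Then $A$ embeds into the complex $C^*$-algebra $A+iA$, whose self-adjoint part $J+iK$ ($K$ the skew part) is generated as a Jordan algebra by $J$ and $iK$. I expect this dimension bookkeeping, i.e.\ showing $\dim J<\infty\Rightarrow \dim A<\infty$ for real $C^*$-type algebras, to be the main technical obstacle. The fallback is to use $s^2\in J$ and spectral theory: each skew $s$ is normal, with spectrum in $i\mathbb{R}$. Since $J$ is finite-dimensional, $s^2$ has finite spectrum, so $s$ lies in the finite-dimensional algebra generated by the spectral projections of $s^2$ and a partial isometry. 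One then bounds $K$ via the Peirce decomposition relative to a complete system of minimal idempotents of $J$, each Peirce space $e_iKe_j$ being controlled by $e_iJe_j$.

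\emph{($\Rightarrow$)} Assume $A$ is just-infinite. Then $J$ is infinite-dimensional, since $\dim A\le$ some function of $\dim J$ by the step above. Let $I\neq 0$ be a closed ideal of $J$. By Lemma \ref{lm3} there is a self-adjoint closed ideal $\overline{P}$ of $A$ with $I=H(\overline{P},*)$, and $\overline{P}\ne0$. Hence $A/\overline{P}$ is finite-dimensional. The map $H(A,*)\to A/\overline{P}$ has kernel $H(\overline{P},*)=I$, so $J/I$ embeds as a real subspace of $A/\overline{P}$ and is finite-dimensional.

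\emph{($\Leftarrow$)} Assume $J$ is just-infinite. Then $A\supseteq J$ is infinite-dimensional. Let $P\neq0$ be a closed ideal of the $\mathbb{R}$-algebra $A$. Then $P\cap P^*$ is a closed self-adjoint ideal, and it is nonzero since it contains $p^*p$ for any nonzero $p\in P$. Replacing $P$ by $P\cap P^*$ only enlarges the quotient, so we may assume $P$ is self-adjoint. We have $H(P,*)\neq0$, and $H(P,*)$ is a closed Jordan ideal of $J$. Thus $J/H(P,*)$ is finite-dimensional. By Corollary \ref{co2}, $B=A/P$ is a real Banach $*$-algebra satisfying the $C^*$-axiom, and $H(B,*)$ is the image of $J$, namely $J/H(P,*)$, which is finite-dimensional. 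Applying the dimension step to $B$ shows that $B$ is finite-dimensional, hence $A$ is just-infinite.
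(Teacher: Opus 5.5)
Your reductions are sound and essentially match the paper's. For ($\Rightarrow$) you use Lemma~\ref{lm3} together with the injection $J/I\hookrightarrow A/\overline{P}$. For ($\Leftarrow$) you pass to $P\cap P^*$, use the nonzero closed ideal $H(P,*)$, and apply Corollary~\ref{co2}.

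But both directions then rest entirely on your ``dimension step'' ($\dim_{\mathbb{R}}H(B,*)<\infty\Rightarrow\dim_{\mathbb{R}}B<\infty$), and that step is never proved. It is the real content of the theorem, and you yourself flag it as the obstacle. None of your sketches supplies it:
\begin{itemize}
\item Noting that the self-adjoint part of $A+iA$ is $J+iK$ bounds nothing about $K$.
\item Finiteness of the spectrum of $s^2$ only shows that each single skew element lies in some finite-dimensional subalgebra. It gives no uniform bound on $\dim K$.
\item The Peirce claim that $e_iKe_j$ is ``controlled by'' $e_iJe_j$ is false on the diagonal. For a minimal projection $e_i$ one has $e_iJe_i=\mathbb{R}e_i$, while $e_iKe_i$ can be three-dimensional: realize $A=\mathbb{H}$ inside $M_2(\mathbb{C})$; then $H(A,*)=\mathbb{R}$ but $\dim_{\mathbb{R}}A=4$.
\end{itemize}
There is also a second issue in ($\Leftarrow$). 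There the step must be applied to $B=A/P$, which Corollary~\ref{co2} only provides as an abstract Banach $*$-algebra with the $C^*$-axiom. So the ambient spectral theory you invoke (normality and spectra in a complex $C^*$-algebra, the algebra $A+iA$) is not directly available for $B$.

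The paper fills this gap algebraically:
\begin{itemize}
\item The finite-dimensional algebra $H(B,*)$ is semisimple, with $1=e_1+\dots+e_n$ a sum of central orthogonal idempotents cutting out simple summands.
\item Lemma~\ref{lm6-II} puts the $e_k$ in $Z(B)$, and Remark~\ref{rm1} makes $1$ the unit of $B$.
\item Each $e_kB$ is simple, since an ideal $Q$ is detected by $H(Q^*Q,*)$.
\item $H(e_kB,*)$ is finite-dimensional, hence PI, so by Amitsur's theorem \cite{Amitsur} $e_kB$ is PI. It is therefore finite-dimensional over its center, which is $\mathbb{R}$ or $\mathbb{C}$.
\end{itemize}

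Your Peirce idea can also be made to work, but only with an argument you have not given:
\begin{itemize}
\item Take orthogonal minimal projections $e_1,\dots,e_n$ of $H(B,*)$ summing to the unit.
\item For $i\neq j$, the map $x\mapsto x+x^*$ injects $e_iBe_j$ into $H(B,*)$.
\item On the diagonal, $H(e_iBe_i,*)=\mathbb{R}e_i$. The $C^*$-axiom forces $s^2=-\mu e_i$ with $\mu>0$ for each nonzero skew $s\in e_iBe_i$, so the skew part carries a positive-definite form.
\item For three orthonormal skew elements, $s_1s_2s_3$ is self-adjoint with square $e_i$. Hence $s_3=\pm s_2s_1$, and $\dim e_iBe_i\le 4$.
\end{itemize}
As written, without one of these arguments, the proof is incomplete.
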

	
	\Proof 
	Let $I$ be a closed non-zero ideal of $H(A,*)$. 
	By Lemma \ref{lm3}, $I=H(P,*)$, 
	where $P$ is a self-adjoint norm-closed ideal of the algebra $A$.
	
	We claim that $H(A,*)/I\cong H(A/P,*)$ 
	is an isomorphism of algebras.
	Indeed, the mapping $\phi: a+I\mapsto a+P$ 
	is an embedding of the Jordan algebra $H(A,*)/I$ 
	in the Jordan algebra $H(A/P,*)$. 
	Let $a+P\in H(A/P,*)$. 
	Then $a-a^*\in P$. 
	We have $$a=\frac{a+a^*}{2}+\frac{a-a^*}{2}.$$ 
	Therefore, in the quotient algebra $A/P$, we have
	$a + P = \frac{a+a^*}{2} + P$. 
	This implies that $\phi$ is an isomorphism.
	
	Assume that $A$ is a just-infinite algebra. 
	Recall, that $A$ does not contain one-sided nil-ideals,
	in particular, it has no trivial ideals.
	We claim that $A$ is a prime algebra.
	Indeed, if $Q_1,Q_2$ are closed ideals of $A$ such that $Q_1Q_2=0$, then, by 
	Lemma \ref{lm6}, $Q_1\cap Q_2=0$. 
	Therefore the natural homomorphism 
	$\pi: A \mapsto A/Q_1 \oplus A/Q_2$ is injective.
	Since $A$ is infinite dimensional, one of the summands should also be 
	infinite dimensional.
	Therefore either $Q_1=0$ or $Q_2=0$, 
	as claimed.

	We further claim that 
	$H(A,*)$ is infinite dimensional over $\mathbb{R}$.
	Assume the contrary. 
	Being a $JB$-algebra, $H(A,*)$ is nil-semisimple. 
	Since $H(A,*)$ is a finite-dimensional algebra, 
	it is a unital algebra and its identity
	$$1 = e_1 + \ldots + e_n$$ 
	is a sum of central orthogonal idempotents 
	$e_1, \ldots, e_n$. 
	Moreover, 
	$$H(A,*)=J_1\oplus\ldots\oplus J_n,$$ 
	where $J_1,\dots,J_n$ are simple subalgebras of $H(A,*)$. 
	By Lemma \ref{lm6-II}, $e_1,\ldots e_n\in Z(A)$, 
	where $Z(A)$ is the center of $A$.
	
	Since $A$ is prime, then $1$ is the identity of $A$, otherwise $Q=\{a-a\cdot 1 \mid a\in A\}$ is an ideal of $A$ and $Q\cdot 1=1\cdot Q=0$. 
	Moreover, $e_kA$, $e_lA$ are ideals of $A$ 
	and $e_kA\cdot e_lA=0$. 
	Consequently, $n=1$ and $H(A,*)$ is a simple algebra.  Let $Q$ be an ideal of $A.$ 
	Then $H(Q^*Q,*)$ is an ideal of $H(A,*).$ 
	Therefore, either $H(Q^*Q,*)=H(A,*)$ or $H(Q^*Q,*)=0$.  Consequently, either $Q=A$ or $Q=0$,  
	that is, $A$ is simple. 
	
	The algebra $H(A,*)$ is finite dimensional. 
	Hence, it is a PI-algebra. 
	Therefore, by Theorem~6 of \cite{Amitsur}, 
	$A$ is PI-algebra. 
	Consequently, $A$ is finite-dimensional over the center $Z(A)$. 
	It is easy to see that the center of $A$ is
	either $\mathbb{R}$ or $\mathbb{C}$. 
	Therefore, $A$ is finite-dimensional, a contradiction.
	Thus, $H(A,*)$ is an infinite dimensional algebra over $\mathbb{R}$.
	
	Let $I$ be a closed non-zero ideal of $H(A,*)$. 
	As we have seen above $H(A,*)/I\cong H(A/P,*)$. 
	Since $ H(A/P,*)$ is finite-dimensional then
	$H(A,*)/I$ is finite-dimensional. 
	Therefore, $H(A,*)$ is a just-infinite Jordan algebra.
	
	Conversely, 
	let $H(A,*)$ be a just-infinite Jordan algebra. 
	Then $A$ is infinite dimensional. 
	
	Let $P$ be a non-zero closed ideal of $A$. 
	We claim that $A/P$ is a finite-dimensional algebra.
	
	For $0\neq x\in P$ we have $0\neq x^*x\in P$ 
	since 
	$\Vert x^*x\Vert=\Vert x\Vert^2$. Consequently, $P^*P\neq 0$. Since  $P^*P\subseteq P^*\cap P$ then  $P^*\cap P$ is a non-zero self-adjoint closed  ideal of $A$.   It suffices to prove that the quotient algebra $A/P^*\cap P$ is finite-dimensional. Therefore,  we can assume that $P$ is a nonzero self-adjoint closed ideal of $A$.
	
	Put $I=H(P,*)$. Then $I$ is a closed ideal of $H(A,*)$, $I\neq 0$ and $H(A,*)/I\cong  H(A/P,*)$. 
	Since $H(A,*)/I$ is a finite-dimensional Jordan algebra, then so is $H(A /P, *)$.  
	By Corollary~\ref{co2}, $H(A /P, *)$ is a Jordan normed algebra whose norm satisfies the equality $\Vert a^2\Vert =\Vert a\Vert^2$.
	Consequently, the Jordan algebra $H(A /P,*)$ is semisimple.  Therefore, it  has the unity $1$ and $1=e_1+\ldots +e_n$ 
	is a sum of central orthogonal idempotents.
	Moreover, $e_i\circ H(A/P, *)$ is a simple algebra.
	By Lemma~\ref{lm6-II}, 
	$e_1,\ldots e_n\in Z(A/P)$.
	By Remark~\ref{rm1}, 
	$1$ is the unity of the algebra $A/P$. Moreover, 
	$$A/P=e_1\cdot A/P\oplus\ldots\oplus e_n\cdot A/P$$ and $e_i\circ H(A/P, *)=H(e_i\cdot A/P,*).$  
	
	Let $Q$ be an ideal of $e_i\cdot A/P.$ 
	Then $H(Q^*Q,*)$ is an ideal of $H(e_i\cdot A/P,*).$ 
	Therefore, either $H(Q^*Q,*)=H(e_i\cdot A/P,*)$ or $H(Q^*Q,*)=0$.  Consequently, either $Q=e_i\cdot A/P$ or $Q=0$,  
	that is, $e_i\cdot A/P$ is simple.
	
	Therefore, $e_i\cdot A/P$ is also finite-dimensional, 
	as previously shown. 
	Hence, $A/P$ is finite-dimensional, as claimed.\qed
	
	We now provide a simpler proof of Theorem \ref{th1} 
	for $C^*$-algebras.
	
	\begin{theor} \label{th1_I}
		Let $A$ be a $C^*$-algebra. 
		The algebra $A$ is just-infinite 
		if and only if
		$H(A,*)$ is just-infinite.\end{theor}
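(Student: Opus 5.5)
The plan is to exploit the fact that for a $C^*$-algebra $A$ every closed ideal $P$ is automatically self-adjoint (\cite{Dixmier}[Proposition~1.8.2]) and $A/P$ is again a $C^*$-algebra. In the complex setting the correspondence between closed ideals of $A$ and closed ideals of $H(A,*)$ becomes very clean. Together with the decomposition $A=H(A,*)\oplus iH(A,*)$ as real vector spaces, this gives $\dim_{\mathbb{R}}A=2\dim_{\mathbb{R}}H(A,*)$. In particular, $A$ is infinite-dimensional if and only if $H(A,*)$ is, which removes the need for the PI argument via Amitsur used in Theorem~\ref{th1}.

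First I will record the isomorphism $H(A,*)/H(P,*)\cong H(A/P,*)$ for any closed ideal $P$ of $A$. This is exactly as in the proof of Theorem~\ref{th1}: the map $a+H(P,*)\mapsto a+P$ is injective, and it is surjective because $a+P=\frac{a+a^*}{2}+P$ whenever $a-a^*\in P$. Since $A/P$ is a $C^*$-algebra, the dimension identity above applied to $A/P$ gives
$$\dim_{\mathbb{C}} A/P<\infty \iff \dim_{\mathbb{R}} H(A,*)/H(P,*)<\infty .$$

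For the direction ($\Rightarrow$), suppose $A$ is just-infinite. Then $H(A,*)$ is infinite-dimensional by the dimension remark. Let $I\neq 0$ be a closed ideal of $H(A,*)$. By Lemma~\ref{lm3} there is a closed ideal $P$ of the $C^*$-algebra $A$ with $I=H(P,*)$, and $P\neq 0$. Hence $A/P$ is finite-dimensional, and so is $H(A,*)/I\cong H(A/P,*)$.

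For the direction ($\Leftarrow$), suppose $H(A,*)$ is just-infinite. Then $A$ is infinite-dimensional. Take a closed ideal $P\neq 0$ of $A$. It is self-adjoint, and for $0\ne x\in P$ we have $0\neq x^*x\in H(P,*)$ by the $C^*$-axiom, so $H(P,*)$ is a nonzero closed ideal of $H(A,*)$ (closedness and the ideal property follow from $P$ being a closed self-adjoint ideal). Therefore $H(A/P,*)\cong H(A,*)/H(P,*)$ is finite-dimensional, and by the displayed equivalence $A/P$ is finite-dimensional. If one wants to phrase just-infiniteness over $\mathbb{R}$, Corollary~\ref{co1} translates between the real and complex versions.

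The only step needing care is the use of Lemma~\ref{lm3} in the first direction, i.e.\ that closed Jordan ideals of $H(A,*)$ come from closed ideals of $A$; this is already established, so the remaining work is routine. I expect no real obstacle beyond checking that the quotient isomorphism respects the $\mathbb{R}$-dimension, which is immediate.
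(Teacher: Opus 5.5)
Your proposal is correct and follows essentially the same route as the paper: for ($\Rightarrow$) you use Lemma~\ref{lm3} together with the isomorphism $H(A,*)/H(P,*)\cong H(A/P,*)$, and for ($\Leftarrow$) you use the self-adjointness of closed ideals together with $A/P=H(A/P,*)+iH(A/P,*)$, transferring infinite-dimensionality via $A=H(A,*)+iH(A,*)$. You also make explicit two small points the paper leaves implicit, namely $P\neq 0$ in the first direction and $H(P,*)\neq 0$ (via $x^*x$) in the second.
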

	
	\Proof 
	Assume that $A$ is a just-infinite $C^*$-algebra. 
	Then $H(A,*)$ is infinite dimensional over $\mathbb{R}$ since $A=H(A,*)+iH(A,*)$.
	Let $I$ be a closed non-zero ideal of $H(A,*)$. 
	By Lemma \ref{lm3}, $I=H(P,*)$, 
	where $P$ is a closed ideal of the $C^*$-algebra $A$;
	and there is an isomorphism of algebras
	$H(A,*)/I\cong H(A/P,*)$. 
	Since $H(A/P,*)$ is finite-dimensional 
	then $H(A,*)/I$ is finite-dimensional. 
	Therefore, $H(A,*)$ is a just-infinite Jordan algebra.
	
	Conversely, 
	let $H(A,*)$ be a just-infinite Jordan algebra. 
	Then $A$ is infinite dimensional  by Lemma~\ref{lm5}.
	Let $P$ be a non-zero closed ideal of $A$. 
	Then $P$ is a self-adjoint ideal 
	(see \cite{Dixmier}[Proposition~1.8.2]).
	Hence, $I=H(P,*)$ is a closed ideal of $H(A,*)$, 
	$I\neq 0$ and $H(A,*)/I\cong  H(A/P,*)$. 
	Since $ H(A,*)/I$ is finite-dimensional,
	then $H(A/P,*)$ is a finite dimensional Jordan algebra. Since $A/P=H(A/P,*)+iH(A/P,*)$, 
	then $A/P$ is finite-dimensional. 
	Hence, $A$ is just-infinite.\qed
	
	Let $A$ be a $C^*$-algebra and let $X$ a subset of $A$. 
	By $\langle X \rangle$, we denote the 
	$\mathbb{R}$-subalgebra of $A$ generated by $X$ 
	that is closed under the involution map $*$.
	
	\begin{co} \label{co3} 
		A $C^*$-algebra $A$ is just-infinite 
		if and only if 
		the closure of algebra $\langle H(A,* )\rangle $ 
		is a just-infinite $\mathbb{R}$-algebra.  
	\end{co}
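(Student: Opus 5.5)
Let $B$ denote the norm closure of $\langle H(A,*)\rangle$ in $A$. The plan is to reduce the corollary to Theorem~\ref{th1_I} and Theorem~\ref{th1} by showing that $A$ and $B$ have the same Jordan algebra of self-adjoint elements, $H(B,*)=H(A,*)$.

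First, $B$ is a real subalgebra of $A$. It is self-adjoint, since $\langle H(A,*)\rangle$ is closed under $*$ and $*$ is isometric, so the closure stays self-adjoint. It is norm-closed by construction. Hence $B$ satisfies the standing hypotheses of Theorem~\ref{th1}.

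Next, $H(A,*)\subseteq \langle H(A,*)\rangle \subseteq B$, so $H(A,*)\subseteq H(B,*)$. Conversely, $H(B,*)\subseteq H(A,*)$ because $B\subseteq A$ and the involution on $B$ is the restriction of that on $A$. Therefore $H(B,*)=H(A,*)$ as Jordan algebras, with the same norm and the same product $x\circ y=\frac12(xy+yx)$.

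Now chain the equivalences:
\begin{itemize}
\item By Theorem~\ref{th1_I}, $A$ is a just-infinite $C^*$-algebra if and only if $H(A,*)$ is just-infinite.
\item $H(A,*)=H(B,*)$ is just-infinite if and only if $B$ is a just-infinite $\mathbb{R}$-algebra, by Theorem~\ref{th1} applied to $B$.
\end{itemize}
Combining these gives the corollary.

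If one wants the $\mathbb{R}$-algebra $A$ rather than the $C^*$-algebra, Corollary~\ref{co1} converts between the two notions. Here Theorem~\ref{th1_I} already speaks of the $C^*$-algebra, so this step is not needed.

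The only point needing care is that Theorem~\ref{th1} is stated for a real subalgebra that is self-adjoint and norm-closed. I must make sure that "closure of $\langle H(A,*)\rangle$" is taken in the $C^*$-norm, which is how the corollary reads, and that closure preserves self-adjointness, which holds by continuity of $*$. Beyond this the argument is formal, so I expect no serious obstacle. The substantive work is already contained in Theorems~\ref{th1} and~\ref{th1_I}.
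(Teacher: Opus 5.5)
Your proposal is correct and is the argument the paper evidently intends; the paper states Corollary~\ref{co3} without proof, directly after Theorem~\ref{th1_I}. The closure $B$ of $\langle H(A,*)\rangle$ is a self-adjoint, norm-closed real subalgebra with $H(B,*)=H(A,*)$, so Theorem~\ref{th1_I} for $A$ and Theorem~\ref{th1} for $B$ chain together exactly as you describe.
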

	
	\begin{lm} \label{lm6-I} 
		Let $B$ be a self-adjoint $\mathbb{K}$-subalgebra 
		of a $C^*$-algebra $A$. 
		Assume that the Jordan algebra $H(B,*)$ is prime. 
		Then $B$ is also prime.  
	\end{lm}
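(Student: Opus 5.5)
We argue by contradiction. Suppose $B$ is not prime, so there are nonzero ideals $P,Q$ of $B$ with $PQ=0$. Since $B$ sits inside a $C^*$-algebra, $x^*x\neq 0$ for $x\neq 0$, so $B$ has no nonzero nilpotent one-sided ideals and in particular no trivial ideals. By Lemma~\ref{lm6}, $PQ=0$ then gives $QP=0$ and $P\cap Q=0$.

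The first step replaces $P$ and $Q$ by self-adjoint ideals. Put $P_1=P+P^*$ and $Q_1=Q+Q^*$; both are self-adjoint ideals of $B$. We want $P_1Q_1=0$. We already know $PQ=0$, $QP=0$, and by applying $*$ also $Q^*P^*=0$ and $P^*Q^*=0$. The mixed products $PQ^*$ and $P^*Q$ need an extra argument. Take $x=pq^*$ with $p\in P$, $q\in Q$. Then $x^*x=qp^*pq^*$. The factor $p^*p$ lies in $P^*P\subseteq P$, so $q(p^*p)\in QP=0$. Hence $x^*x=0$, and the $C^*$-property gives $x=0$. The same argument with $p,q$ interchanged gives $P^*Q=0$. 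Therefore $P_1Q_1=0$, and $P_1,Q_1$ are nonzero because they contain $P$ and $Q$.

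Next we pass to the Jordan algebra. Set $I=H(P_1,*)$ and $K=H(Q_1,*)$. Both are nonzero: as noted before Lemma~\ref{lm1}, $p^*p\in H(P_1,*)$ is nonzero for $0\neq p\in P_1$. Both are Jordan ideals of $H(B,*)$, since for $h\in H(B,*)$ and $a\in I$ the product $h\circ a=\frac12(ha+ah)$ lies in $P_1$ and is self-adjoint; the same holds for $K$. Finally $I\circ K\subseteq P_1Q_1+Q_1P_1=0$, using $Q_1P_1=0$ from Lemma~\ref{lm6} or by symmetry. This contradicts primeness of $H(B,*)$, so $B$ is prime.

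The main obstacle is the reduction to self-adjoint ideals, specifically showing the mixed products $PQ^*$ and $P^*Q$ vanish. The trick $x^*x=q(p^*p)q^*$ with $p^*p\in P$ handles this. One should also check that primeness for a Jordan algebra is understood in the sense that $I\circ K\neq 0$ for nonzero ideals $I,K$, which is the definition used in the paper. If instead closed ideals were intended, Lemma~\ref{lm6} shows that the two notions agree, and closures preserve both self-adjointness and the vanishing of products.
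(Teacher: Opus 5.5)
Your proof is correct and has the same skeleton as the paper's. Both replace $P,Q$ by nonzero self-adjoint ideals with zero product, pass to their Hermitian parts (which are Jordan ideals of $H(B,*)$ with zero Jordan product), and contradict primeness.

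The only real difference is which self-adjoint ideals you use:
\begin{itemize}
\item The paper takes $P^*P$ and $QQ^*$. The product then vanishes for free, since $P^*P\,QQ^*\subseteq P^*(PQ)Q^*=0$. Nonvanishing comes from $0\neq p^*p\in H(P^*P,*)$.
\item You take $P+P^*$ and $Q+Q^*$. Nonvanishing is then free, but you must kill the mixed products $PQ^*$ and $P^*Q$ with the $C^*$-identity.
\end{itemize}
The paper's choice is slightly more economical; yours is equally valid.

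One small slip: literally interchanging $p$ and $q$ in your argument gives $QP^*=0$. That is just the adjoint of $PQ^*=0$, not $P^*Q=0$. For the latter, take $y=p^*q$. Then $y^*y=q^*(pp^*)q$, and $(pp^*)q\in PQ=0$ because $pp^*\in P$, so $y=0$.
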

	
	\Proof 
	The algebra $B$ contains no nilpotent ideals. 
	Let $P$ and $Q$ be ideals of $B$. 
	Assume $PQ=0$. 
	Then $P^*P$ and $QQ^*$ are ideals of $B$, 
	and $P^*PQQ^*=0$.
	By Lemma~\ref{lm6}, we get $QQ^*P^*P=P^*P\cap QQ^*=0$. Therefore, $H(P^*P,*)\circ H(QQ^*,*)=0$. 
	Since $H(P^*P,*)$ and $H(QQ^*,*)$ 
	are  ideals of $H(B,*$) 
	then, by hypothesis, we have 
	either $H(P^*P,* )=0$ or $H(QQ^*,*)=0$. 
	It implies that $P=0$ or $Q=0$ since $p^*p\in H(P^*P,*)$ and $qq^*\in H(QQ^*, *)$ 
	for all $p\in P$ and $q\in Q$.
	Consequently, $B$ is a prime algebra. \qed
	
	\section{\bf The structure of just-infinite $JB$-algebras}
	\hspace*{\parindent}\vspace*{-10pt}
	
	In this section, 
	we describe the structure of just-infinite Jordan Banach algebras. 
	
	\begin{df}
		Every norm-closed Jordan subalgebra of $B(\mathcal{H})_{sa}$ is called a $JC$-algebra \cite{AlShSt, Han-OlSt}.\end{df}
	
	Let's consider an important example of $JB$-algebra, called the spin factor.
	
	\begin{ex} 
		Let $\mathcal{H}$ be a real Hilbert space 
		of dimension at least $3$ and
		$e$ be a distinguished unit vector in $\mathcal{H}$. 
		Let $V =span(e)^\perp$, so
		$\mathcal{H}=\mathbb{R}e\oplus V$. 
		Then $\mathcal{H}$ becomes a spin factor 
		when equipped with the Jordan product
		$$(\alpha e + a) \circ  (\beta e + b) = (\alpha \beta  + (a, b))e + (\alpha b + \beta a), \alpha,\beta\in \mathbb{R},  a, b \in V.$$
		The  norm is defined by  $$\Vert \alpha e + a\Vert =|\alpha|+\Vert a\Vert.$$ 
		Every  spin factor is a $JC$-algebra \cite{AlShSt}. \end{ex}
	
	We will also show that a just-infinite $JB$-algebra is a $JC$-algebra.

	\begin{theor} \label{th2}
		Let $J$ be a just-infinite $JB$-algebra. 
		Then $J$ is prime.
	\end{theor}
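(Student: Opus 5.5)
By Lemma \ref{lm6} it is enough to show that $J$ is topologically prime: if $P$ and $Q$ are nonzero closed ideals of $J$, then $P\circ Q\neq 0$. The argument copies the one used in the proof of Theorem \ref{th1} for the associative case. We first show that $J$ has no nonzero ideal $I$ with $I\circ I=0$. Then we show that $P\circ Q=0$ forces $P\cap Q=0$. Finally, just-infinity together with a dimension count gives a contradiction.

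\emph{Step 1.} $J$ has no trivial ideals. If $a\in J$ and $a^2=0$, then $\Vert a\Vert^2=\Vert a^2\Vert=0$, so $a=0$. Hence a nonzero ideal $I$ cannot satisfy $I\circ I=0$. More precisely, $J$ contains no nonzero nilpotent elements, since $a^{2^k}=0$ implies $a=0$ by iterating the norm identity.

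\emph{Step 2.} Let $P,Q$ be closed ideals with $P\circ Q=0$. Then $P\cap Q$ is a closed ideal, and $(P\cap Q)\circ(P\cap Q)\subseteq P\circ Q=0$. By Step 1, $P\cap Q=0$.

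\emph{Step 3.} Consider the natural map $\pi\colon J\to J/P\oplus J/Q$. Since $P$ and $Q$ are closed, the quotients $J/P$ and $J/Q$ are $JB$-algebras by \cite{AlShSt}[Lemma 9.2]. The kernel of $\pi$ is $P\cap Q=0$, so $\pi$ is injective. If both $P$ and $Q$ were nonzero, just-infinity would make both $J/P$ and $J/Q$ finite dimensional. Then $J$ would embed into a finite-dimensional space, contradicting $\dim J=\infty$. Hence $P=0$ or $Q=0$. So $J$ is topologically prime, and therefore prime by Lemma \ref{lm6}.

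The one point that needs care is that in a Jordan algebra the product of two ideals need not be an ideal. This is harmless for the argument: primeness is defined by $P\circ Q\neq 0$ for nonzero ideals $P,Q$, and only the intersection $P\cap Q$, which is an ideal, is used in Step 2. The passage from ideals that are not closed to closed ones is handled by Lemma \ref{lm6}: if $P\circ Q=0$, continuity of the product gives $\overline{P}\circ\overline{Q}=0$.
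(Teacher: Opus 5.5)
Your proof is correct and follows the paper's argument essentially line for line. In both, the absence of nilpotents (from $\Vert a^2\Vert=\Vert a\Vert^2$) forces $P\cap Q=0$ when $P\circ Q=0$, so $J$ embeds in the finite-dimensional algebra $J/P\oplus J/Q$, which is a contradiction; your explicit appeal to Lemma~\ref{lm6} to pass from closed ideals to arbitrary ideals only spells out a step the paper leaves implicit.
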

	
	\Proof 
	Let $P$ and $Q$ be closed non-zero ideals of $J$. 
	Assume $PQ=0$. 
	The algebra $J$ does not contain nilpotent elements
	since $\Vert x^2\Vert=\Vert x\Vert^2$ for any $x\in J$. 
	Therefore, by Lemma \ref{lm6}, $P\cap Q=0$. 
	Then the algebra $J$ is embedded into the algebra $J/P\oplus J/Q$. 
	Since $J$ is a just-infinite algebra
	then $J/P\oplus J/Q$ is finite-dimensional. 
	Therefore, $J$ is finite-dimensional.
	This is a contradiction. 
	Consequently, $J$ is prime.\qed
	
	In any Jordan algebra we can define the linear mapping $$U_a:x\mapsto 2(x\circ a)\circ a -x\circ a^2.$$
	
	Every $JB$-algebra $J$ is non-degenerate, that is, $U_a(J)\neq 0$ for any $a\neq 0$.
	
	Let $A$ be an associative algebra. 
	Let $J$ be a subalgebra of the corresponding
	Jordan algebra $A^{(+)}$. Then for any elements $a,b,c\in J$ 
	the following identities hold: 
	\bee \label{Qcomm1}[a,b]^2=4a\circ (a,b,b)-2(a^2, b,b);\eee
	\bee \label{Jass}(a,b,c)=\frac{1}{4}[b,[a,c]],\eee 
	where $(a,b,c)$ 
	denotes the associator of the elements $a,b,c$ in $J$.
	
	Define the mapping $D(a,b):J\mapsto J$ by setting
	\bee \label{diff}  D(a,b)(c)=(a,c,b)=\frac{1}{4}[c,[a,b]]. \eee
	The mapping $D(a,b)$ is a derivation of 
	the Jordan algebra $J$.
	
	Recall that in any Jordan algebra, 
	the following identity holds
	\bee \label{Jordan} (x y, z, t) + (xt, z, y) + (yt, z, x) = 0.\eee 
	
	Let $SJ\langle X\rangle$ be a free special Jordan algebra generated by the set of the free generators $X$. 
	Let $x,y,z,$ and $t$ be elements of the set $X$. 
	Then the element $$\{x,y,z,t\} = xyzt + tzyx$$ 
	is called {\it a tetrad}.
	
	Let us fix the elements 
	$x,y,z \in X $. 
	In~\cite{Zelmanov1983}, 
	the following elements were defined: 
	$c=2D(x,y)^2(z) \circ D^3(x,y) (z)$ and 
	$f(x,y,z)=c^{180}$.
	Let $f_0,\ldots,f_s$ be 
	all possible linearizations of the element $f$, and 
	let $f_i(J)$ denote the value of $f_i$ 
	in the Jordan algebra $J$.
	
	In~\cite{Zelmanov1983}, the ideal 
	$T = \sum_{i=0}^{s} I(f_i)$ of $J$ was defined, 
	where $I(f_i)$ is the ideal of $J$ generated by the set $f_i(J)$.
	
	Let $A$ be an associative algebra with an involution $j$. Let $J$ be a subalgebra of $H(A,j)$. 
	Let $B$ be the associative subalgebra generated by $J$ in $A$. 
	It is well known (see \cite{ZHSlShShir, Han-OlSt}) that the algebra $H(B,j)$ 
	is generated by $J$ and $\{J, J, J,J\}$.
	
	As was shown in~\cite{Zelmanov1983}, we have  
	\bee \label{eatTetrad} T\circ\{J,J, J,J\}\subseteq J\,, \{T,J, J,J\}\subseteq J.\eee
	
	In~\cite{Zelmanov1983} the following was proved 
	
	\begin{theor} \label{th3}
		Let $J$ be a prime non-degenerate Jordan algebra. 
		Then one of the following cases holds:
		1) $J$ is an Albert ring;
		
		2) $J$ is the central order in a Jordan algebra 
		of a non-degenerate bilinear form;
		
		3) $J$ contains a non-zero ideal $I$, 
		which is isomorphic to an algebra $A^ {(+)}$, 
		for a prime associative algebra $A$. 
		The algebra $J$ is embedded in the two-sided
		Martindale quotient ring of the algebra $A$, that is, $A^{(+)}\unlhd J\subseteq Q(A)^{(+)}$;
		
		4) $J$ contains a non-zero ideal $I$, 
		which is isomorphic to an algebra $H(A,j)$, 
		where $A$ is a prime associative algebra
		with an involution $j: A \mapsto A$. 
		The algebra $J$ is embedded 
		in the two-sided Martindale quotient ring, 
		the involution $j$ is extended in the unique way to the involution $j: Q(A)\mapsto Q(A)$ 
		of the quotient ring; and 
		$H(A,j)\unlhd J\subseteq H(Q(A),j)$.
	\end{theor}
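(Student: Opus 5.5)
The plan is to reproduce the structure of Zelmanov's argument in \cite{Zelmanov1983}. The dichotomy is driven by the ideal $T=\sum_{i=0}^{s}I(f_i)$ defined above: whether it vanishes separates cases 1)--2) from cases 3)--4). Throughout, $J$ is prime and non-degenerate.

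\emph{Case $T=0$.} Then $J$ satisfies the identity $f$ and all its linearizations $f_0,\ldots,f_s$. The first step is to check that $f$ is not an identity of the free special Jordan algebra. This is a property of the element $c=2D(x,y)^2(z)\circ D^3(x,y)(z)$: its power $c^{180}$ is nonzero on special algebras of unbounded degree, but vanishes on $27$-dimensional Albert algebras and on Jordan algebras of bilinear forms. So $J$ is a prime non-degenerate Jordan PI-algebra. For such algebras I would invoke the Jordan analogue of Posner's theorem. The centre $Z(J)$ is then nonzero and has no zero divisors acting on $J$, and the central closure $(Z(J)\setminus 0)^{-1}J$ is a simple finite-dimensional Jordan algebra over the field of fractions $F$ of $Z(J)$. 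Simple finite-dimensional Jordan algebras are Albert algebras, algebras of a non-degenerate symmetric bilinear form, or of the form $A^{(+)}$ or $H(A,j)$ with $A$ simple. In the last two types the algebra has degree large enough for $f$ to be nonzero, since $f$ vanishes only in small degree. This leaves exactly case 1), $J$ an Albert ring, and case 2), a central order in the algebra of a bilinear form.

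\emph{Case $T\neq 0$.} First I would show that $J$ is i-special. Every exceptional i-identity vanishes on $T$, using that $f$ is a ``hermitian'' element. The non-degenerate prime algebra $J$ then embeds, modulo its zero i-special kernel, into $H(B,j)$. Here $B$ is the associative algebra generated by $J$ in a suitable special envelope, equipped with the reversal involution $j$. Next I would use (\ref{eatTetrad}), $T\circ\{J,J,J,J\}\subseteq J$ and $\{T,J,J,J\}\subseteq J$: $T$ ``eats tetrads''. Since $H(B,j)$ is generated by $J$ and $\{J,J,J,J\}$, this lets me choose a nonzero ideal $A$ of $B$ (for instance the ideal generated by $U_TT$ or by $T^3$) with $H(A,j)\subseteq J$. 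Then $H(A,j)$ is a nonzero ideal of $J$, and primeness of $A$ follows from primeness of $J$ exactly as in Lemma~\ref{lm6-I}. If $j$ restricted to $A$ is of ``exchange type'', i.e.\ $A=A_1\oplus A_1^{j}$, then $H(A,j)\cong A_1^{(+)}$ and we land in case 3); otherwise we land in case 4).

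For the embedding into the Martindale quotient ring $Q(A)$, note that $J\subseteq B$ and $A$ is an ideal of $B$. Each $x\in J$ therefore acts on $A$ by left and right multiplication, giving a compatible pair of bimodule maps, hence an element of $Q(A)$. This action is faithful by primeness. Moreover $j$ extends uniquely to $Q(A)$ via $q^j(a)=(a^j q)^j$, and the image of $J$ lies in $H(Q(A),j)$, respectively in $Q(A)^{(+)}$.

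The main obstacle is the first step of the case $T\neq0$: proving that $J$ is special, or more precisely that $J/\mathrm{Ker}$ is special and that non-degeneracy with $T\neq0$ forces the i-special kernel to vanish. This needs Zelmanov's theory of absolute zero divisors and the local nilpotency results behind it. I would try to obtain it from the statement that every non-degenerate Jordan algebra without nonzero ideals annihilated by $f$ is a subdirect product of i-special ones, combined with primeness.
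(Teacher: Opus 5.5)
The paper gives no proof of this theorem; it quotes it from Zel'manov. Your sketch therefore has to stand on its own, and it fails at the first step. The split $T=0$ versus $T\neq0$ does not separate case 1) from cases 3)--4), because $f$ does not vanish on Albert algebras.

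Take $J=H_3(\mathbb{R})$, $x=E_{11}$, $y=E_{12}+E_{21}$, $z=E_{11}-E_{22}+E_{13}+E_{31}$. Then $D(x,y)(w)=\frac14[w,E_{12}-E_{21}]$. A direct computation shows that $c$ is a nonzero multiple of $E_{12}+E_{21}+4(E_{23}+E_{32})$. Since $H_3(\mathbb{R})$ is formally real, $f=c^{180}\neq 0$. As $H_3(\mathbb{R})$ is a subalgebra of the Albert algebra $H_3(\mathbb{O})$, the latter has $T=J\neq 0$.

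Your plan is also internally inconsistent on this point. To keep the $H_3(D)$-type algebras out of your $T=0$ branch you need $f\neq0$ on symmetric $3\times 3$ matrices, and $H_3(F)$ is a subalgebra of every Albert algebra $H_3(C)$.

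So Albert rings land in your $T\neq0$ branch. There you assert that all s-identities vanish on $T$, so that $J$ is i-special. For the Albert algebra $T=J$ and Glennie's identity does not vanish, so this step is false, not just hard. Tetrad eating (\ref{eatTetrad}) is a statement about algebras already lying in some $H(B,j)$, so it cannot produce specialness.

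The missing ingredient is Zel'manov's separate theorem that a prime non-degenerate Jordan algebra is either i-special or an Albert ring. That theorem is proved independently of $T$ and is the deep exceptional part of the theory. Only after it does $T$ do its job, inside the i-special class:
\begin{itemize}
\item if $T=0$, then $J$ is PI and its central closure is a special simple algebra satisfying $f$, hence of bilinear-form type, giving 2);
\item if $T\neq0$, you get 3)--4).
\end{itemize}
Your strategy for these two sub-cases (Posner, then tetrad eating plus the Martindale quotient) is reasonable. This also matches the paper: it claims $T\neq 0$ only in cases 3)--4), and in Theorem~\ref{th4} it removes the Albert case by a separate dimension argument.

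Two further steps in your $T\neq0$ branch need repair.
\begin{itemize}
\item Being i-special (satisfying all s-identities) does not give an embedding $J\subseteq H(B,j)$. You still have to show that the kernel of a specialization is zero, and you give no argument for this.
\item Primeness of $A$ does not follow ``exactly as in Lemma~\ref{lm6-I}'', because that proof uses the $C^*$-norm. In a general envelope $B$ there can be nonzero ideals meeting $J$ trivially; Clifford envelopes of spin factors are already non-prime. These ideals must be factored out first, via the largest ideal annihilated by $T$; this is the algebraic analogue of Lemmas~\ref{lm10}--\ref{lm11}.
\end{itemize}
Even after that you only obtain $j$-primeness, and this is exactly where the split between 3) and 4) comes from. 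A $j$-prime algebra is either prime or of the form $A_1\oplus A_1^{j}$ with $A_1$ prime. As written, you declare $A$ prime and then treat the exchange case $A=A_1\oplus A_1^{j}$, which is never prime.
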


	Note that in cases 3 and 4 of the theorem, 
	the ideal $T$ is not equal to $0$ 
	(see \cite{Zelmanov1983}).
	
	Using Theorem~\ref{th3}, we prove
	
	\begin{theor} \label{th4}
		Let $J$ be a just-infinite $JB$-algebra. 
		Then $J$ is a $JC$-algebra. 
		Moreover, one of the following cases holds:
		
		$(i)$ $J$ is an infinite-dimensional spin factor;
		
		$(ii)$ there is a $C^*$-algebra $A$ 
		and just-infinite norm-closed real $*$-subalgebras $A_1$ and $A_2$ of $A$ 
		such that $$H(A_1,*)\unlhd J\subseteq H(A_2,*).$$
		Moreover, 
		the $C^*$-algebra $A$ is the closure of $A_2+iA_2$, and $A_1$ is an ideal of $A_2$.
	\end{theor}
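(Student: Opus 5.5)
We plan to combine Theorem~\ref{th2} with Zelmanov's classification (Theorem~\ref{th3}). By Theorem~\ref{th2}, $J$ is prime, and every $JB$-algebra is non-degenerate, so Theorem~\ref{th3} applies and one of its four cases holds. We will treat the cases in turn.

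\emph{Case 1.} If $J$ were an Albert ring, then $J$ would be a central order in a $27$-dimensional exceptional algebra over the field of fractions of its center $Z(J)$. First, $Z(J)$ is a just-infinite (or finite-dimensional) commutative $JB$-algebra. A prime associative $JB$-algebra is $C(X)$ with $X$ connected and without nontrivial ideal splitting, which forces $Z(J)=\mathbb{R}$. In fact, a prime commutative associative $JB$-algebra is one-dimensional, since prime $C_0(X)$ means $X$ is a point. Hence $J$ would be finite-dimensional, which is a contradiction.

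\emph{Case 2.} Here $J$ is a central order in the Jordan algebra of a non-degenerate bilinear form. Arguing as above, the center is $\mathbb{R}$, so $J=\mathbb{R}1\oplus V$ is a spin factor. The $JB$-norm condition $\Vert a^2\Vert=\Vert a\Vert^2$ forces the form on $V$ to be positive definite. Completeness of $J$ then gives a real Hilbert space, and infinite-dimensionality gives $(i)$. A spin factor is $JC$, as recalled in the example.

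\emph{Cases 3 and 4.} Case 3 is a special instance of case 4, with $A\oplus A^{op}$ under the exchange involution, so we concentrate on case 4. The first task, and the main obstacle, is to show that $J$ is $JC$. Here we use the ideal $T$ from \cite{Zelmanov1983}, which is nonzero in these cases. Let $\overline{T}$ be its norm closure. It is a nonzero closed ideal, so $J/\overline{T}$ is finite-dimensional; finite-dimensional $JB$-algebras without an Albert summand are $JC$. The exceptional ideal of $J$ (the intersection of kernels of representations into $B(\mathcal{H})_{sa}$, as in \cite{AlShSt}) is a closed ideal. We would show that it meets $\overline{T}$ trivially, using that by (\ref{eatTetrad}) $T$ is special and tetrad-closed. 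Primeness then forces it to be zero, so $J$ embeds into some $B(\mathcal{H})_{sa}$. This step is the one we expect to be delicate.

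Having $J\subseteq B(\mathcal{H})_{sa}$, we take $A_2$ to be the norm closure of the real $*$-subalgebra $\langle J\rangle$, and $A$ the closure of $A_2+iA_2$. We would then show $J\subseteq H(A_2,*)$. Next, let $A_1$ be the closed ideal of $A_2$ generated by $T$. Using $H(B,*)=J+\{J,J,J,J\}$ and (\ref{eatTetrad}), we get $H(A_1,*)\subseteq J$, and $H(A_1,*)$ is a nonzero ideal of $J$, so $H(A_1,*)\unlhd J$.

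Finally, the just-infiniteness of $A_1$ and $A_2$ comes from Theorem~\ref{th1}. It suffices to show that $H(A_1,*)$ and $H(A_2,*)$ are just-infinite. For $H(A_1,*)$ we argue as follows. It is an ideal of the prime $J$, and any nonzero closed ideal of it contains a closed $J$-ideal obtained via $U$-operators, of finite codimension in $J$. For $H(A_2,*)$, we show that every closed ideal of it meets $J$ nontrivially by primeness (Lemma~\ref{lm6-I}), and its codimension is controlled by that of the intersection.
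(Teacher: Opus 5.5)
\emph{The gap: just-infiniteness of $A_2$.} You take $A_2$ to be the closed real $*$-algebra generated by $J$ in whatever realization $J\subseteq B(\mathcal{H})_{sa}$ your $JC$ step produces. You then assert that every closed ideal of $H(A_2,*)$ meets $J$ ``by primeness (Lemma~\ref{lm6-I})''. But Lemma~\ref{lm6-I} needs $H(A_2,*)$ to be prime, and primeness of $J$ does not give this. In fact $A_2$ need be neither prime nor just-infinite.

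For a counterexample, take $\mathcal{H}=\mathbb{C}^4\otimes\mathcal{H}'$ with $\dim\mathcal{H}'=\infty$, and regard $M_2(\mathbb{H})$ ($\mathbb{H}$ the quaternions) as a real $*$-subalgebra of $M_4(\mathbb{C})$. Let $J=K(\mathcal{H})_{sa}+H_2(\mathbb{H})\otimes 1$.
\begin{enumerate}
\item[(a)] $J$ is a just-infinite $JC$-algebra with $T\neq 0$. Every nonzero closed ideal of $J$ contains $K(\mathcal{H})_{sa}$.
\item[(b)] $J/K(\mathcal{H})_{sa}\cong H_2(\mathbb{H})$ is the spin factor with $5$-dimensional vector part $V$. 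The map $\theta(\alpha 1+v)=\alpha 1-v$ is a Jordan automorphism of it.
\item[(c)] Hence $x\mapsto(x,\theta(\bar x))$, with $\bar x$ the class of $x$, is an isometric Jordan embedding of $J$ into $B(\mathcal{H}\oplus\mathbb{C}^4)_{sa}$.
\item[(d)] For an orthonormal basis $e_1,\dots,e_5$ of $V$ one has $e_1e_2e_3e_4e_5=\pm 1$ in $M_2(\mathbb{H})$. So the product of the images of $e_1\otimes 1,\dots,e_5\otimes 1$ is $\pm(1,-1)$.
\item[(e)] Together with the image $(1,1)$ of the identity, this puts $(0,1)$, and hence all of $0\oplus M_2(\mathbb{H})$, into $A_2$.
\end{enumerate}
So $0\oplus M_2(\mathbb{H})$ is a nonzero closed ideal of $A_2$ that meets $J$ trivially and has infinite-dimensional quotient. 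Your claim fails here.

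\emph{The missing idea} is the reduction in Lemmas~\ref{lm9}--\ref{lm11}. Let $K_1$ be the largest ideal of the $C^*$-algebra $B$ generated by $J$ with $\overline{T}K_1=K_1\overline{T}=0$.
\begin{enumerate}
\item[(1)] Since $J$ is prime and $T\neq 0$, $J\cap K_1=0$, so $J$ still embeds in $B/K_1$.
\item[(2)] There the closed real algebra $A(J)$ generated by $J$ is prime.
\item[(3)] Only then does Lemma~\ref{lm9}(iii) give $J\cap P\neq 0$ for every nonzero closed ideal $P$ of $A(J)$.
\item[(4)] Lemma~\ref{lm12} concludes from the finite-dimensional associative envelope of $J/(J\cap P)$ plus a density argument, and Theorem~\ref{th1} finishes.
\end{enumerate}
The paper then sets $A_2=A(J)$ and $A=B(J)$. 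It obtains $A_1$ from Lemma~\ref{lm3} applied to $\overline{T}$, which is an ideal of $H(A(J),*)$ by the argument of Lemma~\ref{lm8}. Your $U$-operator argument for $H(A_1,*)$ is fine once you work inside this $A(J)$.

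Two smaller points. First, the step you call the main obstacle is easy: in cases 2)--4) of Theorem~\ref{th3}, $J$ is special, and special $JB$-algebras are $JC$ \cite{AlShSt,Han-OlSt}. Second, folding case 3) into case 4) is harmless, since you only use $T\neq 0$ there. The paper instead excludes case 3) outright via Lemma~\ref{lm7}.
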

	
	First we will prove several lemmas.
	
	\begin{lm} \label{lm7} 
		Let $J$ be a $JC$-algebra. 
		Assume that $J$ contains a nonzero ideal $I$, 
		which is isomorphic to an algebra $A^ {(+)}$, 
		where $A$ is an associative algebra.
		Then $I^2\in Z(J).$    \end{lm}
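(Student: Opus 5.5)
The plan is to compare the two associative structures living on $I$: the abstract product of $A$, and the operator product of $B(\mathcal{H})$ in which $J$ sits as a $JC$-algebra. I will show that these agree and that, as a result, $I$ is commutative inside $B(\mathcal{H})$. Centrality of $I\circ I$ then follows from a short commutator computation. (I read the conclusion $I^2\in Z(J)$ as $I\circ I\subseteq Z(J)$.)

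\textbf{Setup.} Let $\varphi\colon A^{(+)}\to I\subseteq B(\mathcal{H})_{sa}$ be the given Jordan isomorphism. Then $\varphi$ is a Jordan homomorphism into the associative algebra $B(\mathcal{H})$, and every $\varphi(x)$ is self-adjoint. For $a,b\in A$ put
$$u=\varphi(ab)-\varphi(a)\varphi(b).$$
Since $\varphi(ab),\varphi(a),\varphi(b)$ are self-adjoint, we get
$$u^*=\varphi(ab)-\varphi(b)\varphi(a).$$

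\textbf{Key step (Herstein-type identity).} Herstein's argument for Jordan homomorphisms works by Jordan identities alone, using only that the target is associative of characteristic $\neq 2$. I would rerun it to obtain
$$u\,x\,u^*=0$$
for all $x$ in the associative subalgebra $B$ of $B(\mathcal{H})$ generated by $I$ (and its unitization where that makes sense). The ingredient is the fact $\varphi(aba)=\varphi(a)\varphi(b)\varphi(a)$, together with its linearizations.

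\textbf{Concluding $u=0$.} This is the step I expect to be the main obstacle, because we cannot simply substitute $x=1$. Instead I would use an increasing approximate identity $\{u_\alpha\}$ of the closed ideal $\overline{I}$ and argue as in Lemma~\ref{lm6-III}:
$$\Vert u u_\alpha\Vert^2=\Vert u u_\alpha^2u^*\Vert=0.$$
Since $u$ lies in the closure of $B$, where $u_\alpha$ acts as an approximate identity, this should give $uu^*=0$, and hence $u=0$ by the $C^*$-axiom. If that route fails, I would substitute $x=u^*u\in B$, which gives $(uu^*)^2=0$ and again $u=0$.

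\textbf{Consequence: $I$ is commutative.} With $u=0=u^*$ we have
$$\varphi(a)\varphi(b)=\varphi(ab)=\varphi(b)\varphi(a).$$
So the elements of $I$ commute pairwise in $B(\mathcal{H})$, and $I$ is closed under the operator product.

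\textbf{Centrality.} Let $a\in I$ and $y\in J$. Since $I$ is an ideal, $a\circ y\in I$, so $[a,a\circ y]=0$. Expanding,
$$[a,a\circ y]=\tfrac12[a,ay+ya]=\tfrac12[a^2,y],$$
so $[a^2,y]=0$. Linearizing in $a$ gives $[a\circ b,y]=0$ for all $a,b\in I$ and $y\in J$. Hence every $z\in I\circ I$ commutes in $B(\mathcal{H})$ with all of $J$. By (\ref{Jass}), for $c,d\in J$,
$$(z,c,d)=\tfrac14[c,[z,d]]=0.$$
Therefore $z\in Z(J)$, and $I\circ I\subseteq Z(J)$.
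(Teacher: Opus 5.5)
\textbf{The key step is overclaimed.} You assert $u\,x\,u^*=0$ for all $x$ in the associative algebra generated by $I$, but a Herstein-type argument only ever evaluates $\varphi$ on elements of $A$. At best it gives a relation for $x\in\varphi(A)=I$, or $x\in\overline I$ by continuity, never for products of elements of $I$. So your fallback substitution $x=u^*u$ is not available.

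Moreover, directly rerunning the Jordan-identity computation yields only a symmetrized relation. Compare $\varphi(\{ab,x,ba\})=\varphi(ab)x'\varphi(ba)+\varphi(ba)x'\varphi(ab)$ with $\varphi(abxba)=a'b'x'b'a'$ and $\varphi(baxab)=b'a'x'a'b'$, and substitute $\varphi(ba)=a'b'+b'a'-\varphi(ab)$. What comes out is
\[
u\,x'\,u^*+u^*\,x'\,u=0\qquad (x'\in I).
\]
The one-sided form $u x' u^*=0$ is a further, nontrivial step that you invoke but do not supply. As written, your proof leans on exactly the part you have not established.

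\textbf{The repair is easy, and it removes the approximate identity.} The symmetrized relation is enough. Take $x'=h^2$ with $h\in I$; note $h^2=\varphi(y^2)\in I$ for $h=\varphi(y)$. Since $h$ is self-adjoint, the relation reads $(uh)(uh)^*+(hu)^*(hu)=0$. This is a sum of two positive operators, so $uh=hu=0$ for every $h\in I$. Then
\[
uu^*=u\varphi(ab)-(u\varphi(b))\varphi(a)=0,
\]
hence $u=0$. The ``$x=1$'' obstacle you worried about thus disappears without Lemma~\ref{lm6-III}-type arguments. The rest of your proposal is fine: commutativity of $I$ in $B(\mathcal H)$, then $[a,a\circ y]=\tfrac12[a^2,y]$, linearization, and (\ref{Jass}). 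This last step is equivalent to the paper's use of (\ref{Tcomm}).

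\textbf{Comparison with the paper.} The paper reaches commutativity without any Herstein-type lemma. Identity (\ref{Qcomm1}) expresses $[a,b]^2$ purely through Jordan operations. Applying it both in $A$ and in $B(\mathcal H)$ gives $\phi([a,b])^2=[\phi(a),\phi(b)]^2$. The left side is $\geq 0$ and the right side equals $-(i[\phi(a),\phi(b)])^2\leq 0$, so both vanish.

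In your notation, $\phi([a,b])=u+u^*$ and $[\phi(a),\phi(b)]=u^*-u$. So the paper's identity is exactly $uu^*+u^*u=0$, which is the ``$x=1$'' case of your relation. It is available without a unit precisely because (\ref{Qcomm1}) is a Jordan identity.

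Your route proves the stronger-looking statement that $\varphi$ is multiplicative, but pays for it with an imported lemma. The paper's argument is the one-line version of the same positivity idea.
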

	
	\Proof
	Let $\phi: A^{(+)}\mapsto I$ 
	be an isomorphism of Jordan algebras.  
	Let $a,b\in  A$.
	Then, by (\ref{Qcomm1}), 
	$$\phi([a,b])^2=\phi([a,b]^ 2)=\phi(4a\circ
	(a,b,b)-2(a^2, b,b))=$$
	$$4\phi(a)\circ (\phi(a),\phi(b),\phi(b))-2(\phi(a)^2, \phi(b),\phi(b)).$$
	The element $\phi([a, b])\in I\subseteq J$. 
	Therefore, $\phi([a,b])^2$ is a positive element in $J$.
	
	The algebra $J$ is a $JC$-algebra, 
	i.e. $J$ is a norm-closed subalgebra of $B(\mathcal{H})_{sa}$ 
	for some complex Hilbert space $\mathcal{H}$. 
	Consider the commutator
	$[\phi(a),\phi(b)]$ of elements $\phi(a), \phi(b)$ 
	in the algebra $B(\mathcal{H})$. 
	Then
	$$[\phi(a),\phi(b)]^2= 4\phi(a)\circ (\phi(a),\phi(b),\phi(b))-2(\phi(a)^2, \phi(b),\phi(b)).$$
	The element $[\phi(a),\phi(b)]$ 
	is skew-symmetric in $B(\mathcal{H})$. 
	Therefore, $i[\phi(a),\phi(b)]$ is a self-adjoint element in $B(\mathcal{H})$. 
	Then $(i[\phi(a),\phi(b)])^2$ 
	is a positive element in $B(\mathcal{H})$. 
	Furthermore,
	$$\phi([a,b])^2+(i[\phi(a),\phi(b)])^2=0.$$
	
	Consequently, for all elements $x,y\in I$ 
	we get  $[x,y]=0$, where
	$[x,y]$ is the commutator in $B(\mathcal{H})$.
	
	Let $x,y\in I, z\in J$.
	By (\ref{Tcomm}), we get
	$$[x\circ y,z ]=-[y\circ z,x ]-[z\circ x,y ]=0,$$ 
	since $y\circ z,z\circ x\in I$.
	By (\ref{Jass}), 
	$$(x\circ y,u,z)=\frac{1}{4}[u,[x\circ y, z]]=0$$ 
	for all $u,z\in J$. 
	Therefore, $I^2\subseteq Z(J)$.
	\qed
	
	In the following lemmas we assume that 
	$J$ is a $JC$-algebra, and 
	$A$ is the norm-closed associative $\mathbb{R}$-subalgebra of $B(\mathcal{H})$, generated by $J$. 
	Let $\overline{T}$ be the closure of the ideal $T$, and let $B$ be the closure of the algebra $A+iA$. 
	Then $B$ is a $C^*$-algebra.
	
	\begin{lm} \label{lm8}  
		The vector space $\overline{T}$ is an ideal of $H(A,*)$.  \end{lm}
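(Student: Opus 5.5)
We need to show that $\overline{T}$ is closed under Jordan multiplication by $H(A,*)$. First we check that $\overline{T}\subseteq H(A,*)$: indeed $T\subseteq J\subseteq H(A,*)$, and $H(A,*)$ is norm-closed because $A$ is norm-closed and the involution is continuous. Since Jordan multiplication is continuous, it suffices to prove
\[
T\circ h\subseteq \overline{T}\quad\text{for all } h \text{ in a dense subset of } H(A,*).
\]
The fact that $\overline{T}$ is closed under multiplication by $H(A,*)$ then follows by passing to limits in both factors.

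The next step is to identify a dense subset of $H(A,*)$. Let $A_0$ be the (non-closed) associative $\mathbb{R}$-subalgebra generated by $J$; it is self-adjoint because $J$ consists of self-adjoint elements. Then $A=\overline{A_0}$. For any $a\in A$ we can approximate $a$ by $a_n\in A_0$, and then $\frac12(a_n+a_n^*)\to \frac12(a+a^*)$. So $H(A_0,*)$ is dense in $H(A,*)$; this is the same argument as in Lemma~\ref{lm1}. By the result quoted from \cite{ZHSlShShir, Han-OlSt}, $H(A_0,*)$ is spanned by $J$ together with the tetrads $\{J,J,J,J\}$. More precisely, it is the Jordan algebra generated by these, but by the standard Cohn theorem every element of $H(A_0,*)$ is a linear combination of Jordan products of elements of $J$ and of tetrads of elements of $J$. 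For $h\in J$ we have $T\circ h\subseteq T$, since $T$ is an ideal of $J$. For tetrads, (\ref{eatTetrad}) gives $T\circ\{J,J,J,J\}\subseteq J$.

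The main obstacle is to upgrade this last inclusion to $T\circ\{J,J,J,J\}\subseteq T$, and then to handle products of several tetrads. For the first point, I would use that $T$ is generated by the values of the linearizations $f_i$, and that $T$ is invariant under derivations and under the action of $H(A_0,*)$ in Zelmanov's construction. Concretely, Zelmanov's argument in \cite{Zelmanov1983} shows that $T$ is an ideal of the Jordan algebra $J+\{J,J,J,J\}$-span. Equivalently, $t\circ\{x,y,z,w\}$ can be expressed via the identity (\ref{eatTetrad}) applied to $\{t,x,y,z\}$-type tetrads, and these lie in the ideal of $J$ generated by $T$, which is $T$ itself. I would cite this directly: the proof of (\ref{eatTetrad}) actually yields $T\circ\{J,J,J,J\}\subseteq T$ and $\{T,J,J,J\}\subseteq T$, because $T$ is constructed from T-ideals and is stable under the relevant operators.

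Given this, define
\[
M=\{h\in H(A_0,*) : T\circ h\subseteq T\}.
\]
By the previous step, $M$ is a subspace containing $J$ and all tetrads. To see that $M$ is closed under the Jordan product, I would use the operator identity
\[
L_{h\circ k}=[L_h,L_k]+\ldots,
\]
in the form of the Jordan identity (\ref{Jord-Id}). This expresses $t\circ(h\circ k)$ through terms of the form $(t\circ h)\circ k$, $(t\circ k)\circ h$ and $(h,t,k)$. Here $(h,t,k)=\frac14[t,[h,k]]$ is a derivation of $H(A_0,*)$ preserving $T$: the element $[h,k]$ is skew, and $T$ is invariant under inner derivations $D(h,k)$ because $T$ is a characteristic ideal built from polynomial values. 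Hence $M=H(A_0,*)$, and the density argument from the first paragraph completes the proof.
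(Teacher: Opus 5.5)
There is a genuine gap. Your reduction to the dense subalgebra $H(A_0,*)$ (Jordan-generated by $V=J+\{J,J,J,J\}$) is fine, and the paper proceeds the same way. The problem is in the two steps that carry the actual content.

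First, the upgrade $T\circ\{J,J,J,J\}\subseteq T$ is asserted, not proved. (\ref{eatTetrad}) only places $T\circ\{J,J,J,J\}$ and $\{T,J,J,J\}$ in $J$. So rewriting $t\circ\{x,y,z,w\}$ through tetrads $\{t,x,y,z\}$ lands you back in $J$, not in $T$, and nothing stated about the construction of $T$ gives more. For the non-closed $T$, the identities at hand yield only $((T\circ T)\circ T)\circ V\subseteq T$.

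Second, the closure of $M$ under $\circ$ fails as written. The three terms $(t\circ h)\circ k$, $(t\circ k)\circ h$, $(h,t,k)$ are linearly dependent, since $(h,t,k)=(t\circ h)\circ k-(t\circ k)\circ h$ by commutativity, so they cannot produce $t\circ(h\circ k)$. The correct relation is $t\circ(h\circ k)=(t\circ h)\circ k-(t,h,k)$ with $(t,h,k)=\frac14[h,[t,k]]$, and there $t$ is not the argument of a derivation $D(h,k)$. Even the invariance you invoke is unjustified. For tetrads $h,k$, $D(h,k)$ is a derivation of $H(A_0,*)$ that need not map $J$ into $J$, so stability of $T$ under derivations of $J$ says nothing. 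Claiming $D(h,k)(T)\subseteq T$ is essentially the statement to be proved.

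The missing ingredient is analytic: work with $\overline{T}$ rather than $T$. A closed ideal $I$ of the $JB$-algebra $J$ satisfies $I=I\circ I$: write $x=x_+-x_-$ and take square roots of $x_\pm$ in the closed subalgebra generated by $x$. Hence $\overline{T}=\overline{T}\circ\overline{T}=(\overline{T}\circ\overline{T})\circ\overline{T}$.

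Since $\overline{T}\circ V\subseteq J$ by continuity, identity (\ref{Jordan}) gives $(\overline{T},J,V)\subseteq(\overline{T}\circ\overline{T},J,V)\subseteq(\overline{T}\circ V,J,\overline{T})\subseteq(J,J,\overline{T})\subseteq\overline{T}$. Then $\overline{T}\circ V\subseteq(\overline{T}\circ\overline{T})\circ(\overline{T}\circ V)+(\overline{T}\circ\overline{T},\overline{T},V)\subseteq\overline{T}$.

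Now set $M=\{h\in H(A_0,*)\,:\,\overline{T}\circ h\subseteq\overline{T}\}$ and take $x,y\in M$. Identity (\ref{Jordan}) again gives $(\overline{T},x,y)\subseteq(\overline{T}\circ y,x,\overline{T})\subseteq(\overline{T},x,\overline{T})\subseteq\overline{T}$. Hence $\overline{T}\circ(x\circ y)\subseteq(\overline{T}\circ x)\circ y+(\overline{T},x,y)\subseteq\overline{T}$. So $M$ is a subalgebra containing $V$, thus $M=H(A_0,*)$, and your density argument finishes the proof.
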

	
	\Proof 
	Since the involution $*$ is a continuous map we have $A^*\subseteq A$. 
	It is clear that 
	$\overline{T}\circ\{J,J,J,J\}\subseteq J\,, \{\overline{T},J,J,J\}\subseteq J$ 
	since $T\circ\{J,J,J,J\}\subseteq J$, 
	$\{T,J,J,J\}\subseteq J$ 
	and the multiplication operation is continuous on $B(\mathcal{H})$.
	
	We note that for any closed ideal $I$ of $J$ 
	the equality $I\circ I=I$ holds. 
	Indeed, it is well known that for any $x\in I$ 
	we have $x=x_+-x_-$, where 
	$x_+$ and $x_-$ are the positive elements of $I$. 
	Then $x_+=u^2$ and $x_-=w^2$, where $u,w$ are the elements of the closed subalgebra generated by $x$. 
	Consequently, $I=I\circ I$.
	
	We claim that $\overline{T}$ is an ideal of $H(A,*)$. 
	Let $A_0$ be the subalgebra of $A$ generated by $J$. 
	It is sufficient to prove that 
	$\overline{T}\circ H(A_0,*)\subseteq \overline{T}.$
	
	Let $V=J+\{J,J,J,J\}$. 
	Then $\overline{T}\circ V\subseteq J$, 
	because $T\circ V\subseteq J$ and 
	the multiplication operation $(\circ)$ is continuous. 
	We have 
	$\overline{T}=\overline{T}^2=(\overline{T}\circ  \overline{T})\circ  \overline{T}=\overline{T}^3$. Therefore, by (\ref{Jordan}), 
	$$(\overline{T},  J,V)\subseteq (\overline{T}\circ \overline{T},  J,V)\subseteq (\overline{T}\circ V  ,  J,\overline{T}) \subseteq (J  ,   J,\overline{T})\subseteq \overline{T}. $$ 
	Consequently,
	$$\overline{T}\circ V\subseteq \overline{T}^3\circ V\subseteq \overline{T}^2\circ(\overline{T} \circ   V) +(\overline{T}^2,   \overline{T},V)\subseteq  \overline{T}\circ J+\overline{T}\subseteq  \overline{T} .$$
	
	Let $x, y\in H(A_0,*)$ and 
	$\overline{T}\circ x,\overline{T}\circ y \in \overline{T}$. 
	Then 
	$$\overline{T}\circ (x\circ y)\subseteq (\overline{T}\circ x)\circ y+(\overline{T},x,y)\subseteq  \overline{T}+(\overline{T}\circ y ,x,\overline{T})\subseteq  \overline{T}+(\overline{T} ,x,\overline{T})\subseteq \overline{T}.$$
	Since $H(A_0,*)$ is generated by $V$, we have $\overline{T}\circ H(A_0,*)\subseteq \overline{T},$ i.e. $\overline{T}$ is an ideal of the algebra $H(A, *)$.\qed
	
	\begin{lm} \label{lm9} 
		Let $I$ be a nonzero ideal of $A$. 
		The following statements hold:
		
		(i)  if  $J\cap I=0$ then $\overline{T}I=I\overline{T}=0$,
		
		(ii) if $J$ is a prime algebra and $T\neq 0$, then $J\cap I  = 0$ if and only if 
		$\overline{T}I = I\overline{T}=0$,
		
		(iii) if $A$ is a prime algebra and $T\neq 0$ 
		then $J\cap I\neq 0$ 
		for any nonzero ideal $I$ of $A$. \end{lm}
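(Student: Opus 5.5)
Part (i) comes first. Assume $J\cap I=0$. We may replace $I$ by $I+I^*$: if $J\cap I=0$ then $J\cap I^*=0$, since $(J\cap I^*)^*=J\cap I$. Then, since the elements of $I\cap I^*$ that are self-adjoint lie in $H(A,*)$, but not necessarily in $J$, it is more convenient to argue with products directly. Take $t\in T$ and $a\in I$. The plan is to show that $ta$ and $at$ vanish, via the tetrad inclusions (\ref{eatTetrad}).

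The key observation is the following. For $a\in I\cap H(A,*)$ and $j_1,j_2,j_3\in J$, the tetrad $\{t,j_1,j_2,a\}$ lies in $I$ because $I$ is an ideal of $A$. It also lies in $\{T,J,J,H(A_0,*)\}$. To use (\ref{eatTetrad}) we need all arguments in $J$, so instead we use elements of the form $a=\{j_1,j_2,j_3,j_4\}$ or products of elements of $J$ lying in $I$.

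A cleaner route is as follows. By Lemma \ref{lm8}, $\overline{T}$ is an ideal of $H(A,*)$ contained in $J$ (we have $\overline{T}\subseteq J$ because $T\subseteq J$ and $J$ is closed). Hence $\overline{T}\circ H(I,*)\subseteq \overline{T}\cap H(I,*)\subseteq J\cap I=0$. Similarly $U_t(h)=tht\in J\cap I=0$ for $t\in\overline{T}$ and $h\in H(I,*)$, since $U$-operators preserve Jordan ideals. Now for $a\in I$ and $t\in \overline{T}$, put $x=ta$. Then
\[
x x^*=t a a^* t=U_t(aa^*)=0,
\]
because $aa^*\in H(I,*)$. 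The $C^*$-property gives $x=0$, so $\overline{T}I=0$, and applying $*$ gives $I\overline{T}=0$. This proves (i).

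For (ii), one direction is (i). Conversely, suppose $\overline{T}I=I\overline{T}=0$ and $J$ is prime with $T\ne0$. Then $J\cap I$ is an ideal of $J$, since $(J\cap I)\circ J\subseteq J\cap I$, and $T\circ(J\cap I)\subseteq \overline{T}I+I\overline{T}=0$. Since $J$ is prime and $T\neq0$, this forces $J\cap I=0$.

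For (iii), let $A$ be prime, $T\ne0$, and $I\ne0$. If $J\cap I=0$, then by (i) we get $\overline{T}I=0$. Let $P$ be the ideal of $A$ generated by $\overline{T}$; it is nonzero, and $PI\subseteq A^\#\overline{T}A^\#I\subseteq A^\#\overline{T}I=0$. This contradicts primeness of $A$, so $J\cap I\neq 0$.

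The main obstacle is justifying in (i) that $\overline{T}$ is a Jordan ideal of $H(A,*)$, so that $U_t(H(I,*))\subseteq\overline{T}$. This is exactly Lemma \ref{lm8}, combined with the standard fact that $U_t(h)=2(h\circ t)\circ t-h\circ t^2$ stays in a Jordan ideal.
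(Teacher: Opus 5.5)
Your argument is correct and follows the paper's structure. In both, Lemma~\ref{lm8} gives $\overline{T}\circ H(I,*)\subseteq \overline{T}\cap I\subseteq J\cap I=0$, the $C^*$-identity then kills products, and your (ii) and (iii) are exactly the paper's arguments.

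The one difference is how you pass from this Jordan-level vanishing to associative vanishing.
\begin{itemize}
\item The paper uses $\overline{T}=\overline{T}\circ\overline{T}$ and identity (\ref{Tcomm}) to get $[\overline{T},H(I,*)]=0$. Together with $\overline{T}\circ H(I,*)=0$ this gives the stronger fact $\overline{T}H(I,*)=H(I,*)\overline{T}=0$, and then $(xt)^*xt=tx^*xt=0$.
\item You only extract $tht=U_t(h)=0$, which already follows from $h\circ t=h\circ t^2=0$. This is exactly what the $C^*$-identity needs, so your route dispenses with (\ref{Tcomm}) and with the idempotence $\overline{T}=\overline{T}\circ\overline{T}$.
\end{itemize}

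Two small points should be cleaned up.
\begin{itemize}
\item The opening suggestion to replace $I$ by $I+I^*$ is not justified as stated. From $J\cap I=J\cap I^*=0$ you cannot conclude $J\cap(I+I^*)=0$, since an intersection with a sum need not vanish when it vanishes on each summand. You do not need this replacement, and the exploratory tetrad paragraph can also be dropped.
\item ``Applying $*$'' to $\overline{T}I=0$ yields $I^*\overline{T}=0$, not $I\overline{T}=0$. Either rerun your argument for the ideal $I^*$, which is legitimate because elements of $J$ are self-adjoint and so $J\cap I^*=J\cap I=0$. Or compute directly: for $a\in I$, $t\in\overline{T}$, $(at)^*(at)=U_t(a^*a)=0$ with $a^*a\in H(I,*)$.
\end{itemize}
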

	
	\Proof 
	Let $J\cap I=0$. 
	Recall  that   $H(I,*)$ is a nonzero ideal of $H(A, *)$. 
	By Lemma \ref{lm8}, 
	$$\overline{T}\circ H(I,*)\subseteq \overline{T}\cap H(I,*)\subseteq J\cap I=0.$$ 
	Since $\overline{T}=\overline{T}\circ \overline{T}$ 
	then by (\ref{Tcomm}), 
	$$[\overline{T},H(I,*)]\subseteq [\overline{T}\circ \overline{T},H(I,*)]\subseteq[\overline{T}\circ H(I,*),\overline{T}]=0.$$
	Consequently, $\overline{T}H(I,*)=H(I,*)\overline{T}=0$.
	
	Let $x\in I$. 
	Since $x^*x\in H(A,* )\cap I\subseteq H(I,*)$, then $tx^*xt=0$ for any $t\in\overline{T}$. Consequently, $(xt)^*xt=tx^*xt=0$.
	From here, we obtain
	$\Vert xt\Vert^2=\Vert(xt)^*xt\Vert=0$. 
	Therefore, $xt=0$. 
	Similarly, $tx=0$.
	
	Thus, $I\overline{T}=\overline{T}I=0$, 
	that is, we proved $(i)$.
	
	Prove $(ii)$. 
	Let $J$ be a prime algebra and let $\overline{T}\neq 0$. Assume $\overline{T}I = I\overline{T}=0$. 
	The vector space $J\cap I$ is an ideal of $J$. 
	Moreover, $\overline{T}\circ (J\cap I)=0$. 
	It follows that $\overline{T}=0$ or $J\cap I=0$ since 
	$J$ is a prime algebra. 
	By hypothesis $\overline{T}\neq 0$. 
	It means that $J\cap I=0$.
	
	Prove $(iii)$. 
	Let $A$ be a prime algebra and let $\overline{T}\neq 0$. Assume $J\cap I=0$ for some ideal $I$ of $A$. 
	Then $\overline{T}I=0$ by $(i)$. 
	Therefore, 
	$$(A^\#\overline{T}A^\#)(A^\#IA^\#)\subseteq A^\#\overline{T}IA^\#=0.$$
	Consequently, $I=0$.
	\qed
	
	\begin{lm} \label{lm10} 
		Let $J$ be a prime algebra, and let $T\neq 0$. 
		Then either $A$ is prime or
		$A$ contains the largest ideal $K$ of $A$ 
		such that $J\cap K=0$.  
	\end{lm}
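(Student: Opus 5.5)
The natural candidate for $K$ is the sum of all ideals of $A$ that meet $J$ trivially. By Lemma~\ref{lm9}(ii), since $J$ is prime and $T\neq 0$, an ideal $I$ of $A$ satisfies $J\cap I=0$ exactly when $\overline{T}I=I\overline{T}=0$. The plan is therefore to set
$$K=\{a\in A \mid \overline{T}a=a\overline{T}=0\},$$
the two-sided annihilator of $\overline{T}$ in $A$. We then show that $K$ is an ideal, that $J\cap K=0$, and that it contains every ideal with zero intersection with $J$. Finally we show that if $K=0$ then $A$ is prime.

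\emph{Step 1: $K$ is an ideal.} First we check that $\overline{T}$ is invariant under multiplication by $A$ in the sense that $A^\#\overline{T}A^\#$ annihilates $K$. The clean way is to replace $\overline{T}$ by the ideal $M=\overline{A^\#\overline{T}A^\#}$ of $A$ and define $K$ as the annihilator of $M$. For any $a\in A$ we have $Ma\subseteq M$, so $K$ is automatically a closed two-sided ideal, and it is self-adjoint because $M^*=M$ (note $\overline{T}\subseteq J$ is self-adjoint). Moreover, $\overline{T}I=I\overline{T}=0$ for an ideal $I$ is equivalent to $MI=IM=0$, since $\overline{T}I=0$ implies $A^\#\overline{T}A^\#I\subseteq A^\#\overline{T}I=0$, and continuity handles the closure. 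Thus Lemma~\ref{lm9}(ii) applies with $M$ in place of $\overline{T}$.

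\emph{Step 2: $K$ is maximal with $J\cap K=0$.} Since $K$ is an ideal and $\overline{T}K=K\overline{T}=0$, Lemma~\ref{lm9}(ii) gives $J\cap K=0$. Conversely, if $I$ is any ideal with $J\cap I=0$, then Lemma~\ref{lm9}(i) gives $\overline{T}I=I\overline{T}=0$, so $I\subseteq K$. Hence $K$ is the largest such ideal.

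\emph{Step 3: the dichotomy.} Suppose $K=0$. Take ideals $P,Q$ of $A$ with $PQ=0$. By Lemma~\ref{lm6}, which applies since $A$ has no trivial ideals, we get $QP=0$ and $P\cap Q=0$. If both $P\cap J\neq0$ and $Q\cap J\neq0$, these are ideals of $J$ whose product $(P\cap J)\circ(Q\cap J)\subseteq PQ+QP=0$ vanishes, contradicting primeness of $J$. Hence one of them, say $P$, satisfies $P\cap J=0$. Then $P\subseteq K=0$ by Step~2, so $A$ is prime. If instead $K\neq0$, the second alternative holds.

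The main obstacle is Step~1: making sure the annihilator is genuinely a two-sided ideal. Passing to $M$ resolves this, and it leaves Lemma~\ref{lm9} usable because that lemma's proof only needs the annihilation property.
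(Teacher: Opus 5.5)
Your proof is correct and follows essentially the paper's argument: $K$ is the largest ideal of $A$ annihilating $\overline{T}$ on both sides, Lemma~\ref{lm9}(ii) gives $J\cap K=0$, Lemma~\ref{lm9}(i) gives maximality, and primeness of $J$ forces one of two ideals $P,Q$ with $PQ=0$ to meet $J$ trivially. The differences are cosmetic. You realize $K$ explicitly as the annihilator of the closed ideal $M=\overline{A^\#\overline{T}A^\#}$, which coincides with the sum of all ideals annihilating $\overline{T}$ that the paper uses implicitly, and you phrase the dichotomy as the contrapositive $K=0\Rightarrow A$ prime.
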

	
	\Proof 
	Assume that $A$ is not a prime algebra. 
	Then there exist ideals $P$ and $Q$ of $A$ 
	such that $PQ = 0$. 
	By Lemma~\ref{lm6}, we get $QP=0$.
	Since the algebra $J$ is prime, 
	either $J \cap P = 0$ or $J \cap Q = 0$. 
	We assume that $J \cap P = 0$.
	By Lemma~\ref{lm9}, we get 
	$\overline{T}P = P\overline{T} = 0$. 
	Consequently, there exists a largest ideal $K$ of $A$ 
	such that $\overline{T}K = K\overline{T} = 0$. 
	Since $J$ is a prime algebra and $\overline{T} \neq 0$, then $J \cap K = 0$ by Lemma \ref{lm9}.
	
	We claim that $K$ is the largest ideal of $A$, 
	with the condition $J \cap K = 0$. 
	Let $I$ be an ideal of $A$ such that $J\cap I=0$. 
	By Lemma~\ref{lm9}, we have $\overline{T}I=I\overline{T}=0$. 
	Therefore, $I\subseteq K$.\qed
	
	\begin{lm} \label{lm11} 
		Let $J$ be a prime algebra, and assume $T\neq 0$. 
		Then there exists a $C^*$-algebra $B(J)$ 
		such that $J$ is a subalgebra of $H(B(J),*)$. 
		A norm-closed $\mathbb{R}$-subalgebra $A(J)$ of $B(J)$, generated by $J$, is prime.
		Moreover, $B(J)$ is the closure of $A(J)+iA(J)$. 
	\end{lm}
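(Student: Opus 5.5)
The plan is to start from the concrete representation: $J\subseteq B(\mathcal{H})_{sa}$, with $A$ the norm-closed real $*$-subalgebra generated by $J$. If $A$ is prime, Lemma~\ref{lm10} is not needed and we simply put $B(J)$ equal to the closure of $A+iA$ and $A(J)=A$. Otherwise, Lemma~\ref{lm10} gives the largest ideal $K$ of $A$ with $J\cap K=0$, and the idea is to pass to the quotient $A/K$.

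First I will show that $K$ is norm-closed and self-adjoint. Its closure $\overline{K}$ is still annihilated by $\overline{T}$ on both sides, so by Lemma~\ref{lm9}(ii) we get $J\cap\overline{K}=0$; maximality then forces $\overline{K}=K$. The same argument applies to $K^*$, because $\overline{T}^*=\overline{T}$ (as $\overline{T}\subseteq H(A,*)$), so $K^*\subseteq K$.

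By Corollary~\ref{co2}, $A/K$ is then a real Banach $*$-algebra whose norm satisfies the $C^*$-axiom. Since $J\cap K=0$, the map $J\to A/K$ is injective. I also expect it to be isometric: a $*$-homomorphism that is injective on a $JB$-algebra should be isometric on self-adjoint elements by the usual spectral-radius argument. Hence the image of $J$ is a closed copy of $J$ in $H(A/K,*)$.

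Next, I need $A/K$ to be the closed real $*$-subalgebra generated by the image of $J$, and to be prime. Generation is clear, since $A$ is generated by $J$ and the quotient map is continuous and surjective. For primeness, take ideals $P,Q$ of $A/K$ with $PQ=0$, and let $\tilde P,\tilde Q$ be their preimages in $A$, so $\tilde P\tilde Q\subseteq K$. Primeness of $J$ together with Lemma~\ref{lm6} should give $J\cap\tilde P\subseteq J\cap K=0$ or the analogous statement for $\tilde Q$. More precisely, $(J\cap\tilde P)\circ(J\cap\tilde Q)\subseteq J\cap K=0$, so by primeness of $J$ one of these intersections is zero. Then the maximality of $K$ yields $\tilde P=K$ or $\tilde Q=K$, that is, $P=0$ or $Q=0$.

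The main obstacle is producing a genuine $C^*$-algebra $B(J)$ containing $A(J)=A/K$. One option is to take $A/K$ inside the complex $C^*$-algebra $B/\overline{K+iK}$, where $B$ is the closure of $A+iA$. This requires checking that $(A+iA)\cap\overline{K+iK}$ meets $A$ exactly in $K$. I will instead use the standard fact that a real Banach $*$-algebra satisfying the $C^*$-axiom ($\Vert x^*x\Vert=\Vert x\Vert^2$), namely a real $C^*$-algebra, embeds isometrically into its complexification, which is a complex $C^*$-algebra. I then take $B(J)$ to be the closure of $A(J)+iA(J)$ there, i.e.\ the complexification itself. If time permits, I would verify the quotient route directly using the approximate identity of $K$ from Lemma~\ref{lm6-III}.
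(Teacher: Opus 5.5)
Your handling of $K$ is sound. Closedness and self-adjointness follow, as you say, from Lemma~\ref{lm9}(i),(ii) and maximality. Your primeness argument for $A/K$ is also correct and more direct than the paper's: Lemma~\ref{lm6} applies in $A/K$ because the $C^*$-axiom rules out trivial ideals, so $\tilde P\tilde Q+\tilde Q\tilde P\subseteq K$ and hence $(J\cap\tilde P)\circ(J\cap\tilde Q)\subseteq J\cap K=0$; primeness of $J$ and maximality of $K$ then finish.

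The paper never forms $A/K$. It works in the complex $C^*$-algebra $B=\overline{A+iA}$ and takes the largest ideal $K_1$ of $B$ with $\overline{T}K_1=K_1\overline{T}=0$, so that $B/K_1$ is automatically a $C^*$-algebra. It then defines $A(J)$ as the closed real subalgebra of $B/K_1$ generated by $J$. Primeness comes from showing that every nonzero ideal of $A(J)$ meets $J$ (pull back to $B$, use maximality of $K_1$) and applying Lemma~\ref{lm10}.

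The gap is at the step you yourself flag as the main obstacle. The ``standard fact'' you invoke is false under the hypothesis you state: a real Banach $*$-algebra with $\Vert x^*x\Vert=\Vert x\Vert^2$ need not be a real $C^*$-algebra, and its complexification need not admit a $C^*$-norm. For example, take $\mathbb{C}$ as an $\mathbb{R}$-algebra with the identity involution. It satisfies $\Vert z^*z\Vert=|z|^2$, but $i$ is self-adjoint with $i^2=-1$, so it has no injective $*$-homomorphism into any complex $C^*$-algebra. The usual characterization of real $C^*$-algebras also requires that $1+x^*x$ be invertible in the unitization for every $x$. Corollary~\ref{co2} gives you only the $C^*$-axiom.

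In your setting the extra condition does hold, but it must be proved. For $a\in A$, the inverse of $1+a^*a$ is $1-g(a^*a)$ with $g(t)=t/(1+t)$. Since $g(0)=0$, functional calculus in the closed real $*$-algebra $A$ gives $g(a^*a)\in A$. Hence $1+x^*x$ is invertible in $(A/K)^\#$ for $x=a+K$. Alternatively, cite that quotients of real $C^*$-algebras by closed ideals are real $C^*$-algebras; $A$ is one, being a closed real $*$-subalgebra of $B(\mathcal{H})$.

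If you instead take the route through $B/L$ with $L=\overline{K+iK}$, the condition $A\cap L=K$ is easy. Indeed $\overline{T}L=L\overline{T}=0$, so $A\cap L$ misses $J$ by Lemma~\ref{lm9}(ii) and lies in $K$ by maximality. But you still need the induced map $A/K\to B/L$ to have closed range, e.g.\ to be isometric. Otherwise $A(J)$ is the closure of the image of $A/K$, and primeness of $A/K$ does not automatically pass to that closure. That transfer problem is exactly what the paper's maximality argument with $K_1$ is designed to handle.
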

	
	\Proof 
	If $A$ is a prime algebra then everything is proven. Namely, $A(J)=A$, $B(J)=B$.
	
	Therefore, by Lemma~\ref{lm10} and Lemma~\ref{lm9},  we can assume that
	$A$ contains the largest ideal $K$ of $A$ such that $\overline{T}K=K\overline{T}=0$. 
	Then $K+iK$ is an ideal of $C^*$-algebra $B$ and $\overline{T}(K+iK)=(K+iK)\overline{T}=0$.
	
	Consequently, the $C^*$-algebra $B$ 
	contains the largest ideal $K_1$ such that
	$\overline{T}K_1=K_1\overline{T}=0$. 
	Since $J$ is a prime algebra we have $J\cap K_1=0$. 
	It is clear that 
	$K_1$ is a closed self-adjoint ideal of $B$. 
	Then the quotient algebra $B/K_1$ is a $C^*$ algebra.
	
	Since $J\cap K_1=0$ we can assume that $J$ is 
	a $JB$-subalgebra of the Jordan algebra $H(B/K_1,*)$.
	
	Let $A(J)$ be a norm-closed $\mathbb{R}$-subalgebra of $B/K_1$ generated by $J$. Denote by  $\tilde{A}$  the image of $A$  under the canonical homomorphism $B \mapsto B/K_1$.  The canonical homomorphism $B\mapsto B/K_1$ 
	is continuous. Therefore,  $\tilde{A}\subseteq A(J)$. The algebra $B(J)$ is the closure of $A(J)+iA(J)$ 
	in the $C^*$-algebra $B/K_1$.

	We claim that $J\cap \tilde{I}\neq 0$ for any nonzero ideal $\tilde{I}$ of $A(J)$. Assume $J \cap \tilde{I} = 0$ 
	for some nonzero ideal $\tilde{I}$ of $A(J)$. By Lemma \ref{lm9}, we have  $\overline{T}\tilde{I}=\tilde{I}\overline{T}=0$. Since $\tilde{A} \subseteq A(J)$ then $\tilde{A}\tilde{I} \subseteq\tilde{I}$ and $\tilde{I}\tilde{A} \subseteq\tilde{I}$.  
	Let $I$ be the preimage of the ideal $\tilde{I}$ 
	in the algebra~$B$. Then $\overline{T}I\subseteq K_1$. Because $\overline{T}=\overline{T}\circ\overline{T}$, and $\overline{T}K_1=0$, we get
	$$\overline{T}I\subseteq (\overline{T}\circ \overline{T})I\subseteq \overline{T}K_1=0.$$
	Similarly, $I\overline{T}=0$. Moreover, $AI\subset I$ and $IA\subset I$. Let $K$ be the closure of $I+iI$ in the algebra $B$. Then $K$ is an ideal of $B$ and  $K\overline{T}=\overline{T}K=0$. 
	The ideal $K_1$ is the largest ideal of $B$ for which $\overline{T}K_1=K_1\overline{T}=0$. Therefore, $K\subset K_1$. 
	Consequently, $\tilde{I}=0$, a contradiction.
	
	Thus, $J\cap \tilde{I}\neq 0$ for any nonzero ideal $\tilde{I}$ of $A(J)$.
	By Lemma~\ref{lm10}, we obtain that $A(J)$ is prime.
	\qed

	We further use the notation $A(J)$ defined in Lemma~\ref{lm11}.
    
	Recall that if a Jordan algebra $J$ is a finite-dimensional subalgebra of $A^{(+)}$, then the subalgebra of $A$ generated by $J$ is also finite-dimensional.
	
	\begin{lm} \label{lm12}
		Let $J$ be a just-infinite algebra, and assume $T \neq 0$. Then the algebra $A(J)$ is just-infinite. 
	\end{lm}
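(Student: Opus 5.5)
We need to show that $A(J)$ is infinite dimensional and that $A(J)/P$ is finite dimensional for every nonzero closed ideal $P$ of $A(J)$. Here $J$ is a just-infinite $JB$-algebra that is already realized as a $JC$-algebra, $T\neq 0$, and $J$ is prime by Theorem~\ref{th2}. By Lemma~\ref{lm11}, $A(J)$ is a prime, norm-closed, self-adjoint real $*$-subalgebra of the $C^*$-algebra $B(J)$, and it is generated by $J$.

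Infinite dimensionality is immediate. $J\subseteq H(A(J),*)\subseteq A(J)$ and $J$ is infinite dimensional, so $A(J)$ is infinite dimensional as well.

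Now let $P$ be a nonzero closed ideal of $A(J)$. As in the proof of Theorem~\ref{th1}, $P^*P\neq 0$ because $\Vert x^*x\Vert=\Vert x\Vert^2$, and $P^*P\subseteq P\cap P^*$. Since $A(J)/(P\cap P^*)$ maps onto $A(J)/P$, it suffices to treat the case where $P$ is self-adjoint.

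By Lemma~\ref{lm11}, $A(J)$ is prime and $T\neq0$, so Lemma~\ref{lm9}(iii) gives $J\cap P\neq 0$. The set $J\cap P$ is a closed ideal of $J$, because $P$ is an ideal of $A(J)$ and $J\subseteq A(J)$, so $J\circ(J\cap P)\subseteq P$. Just-infiniteness of $J$ then shows that $J/(J\cap P)$ is finite dimensional.

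Consider the canonical map $\pi:A(J)\to A(J)/P$. By Corollary~\ref{co2}, the quotient $A(J)/P$ is a Banach $*$-algebra satisfying the $C^*$-axiom. The image $\pi(J)\cong J/(J\cap P)$ is a finite-dimensional Jordan subalgebra of $(A(J)/P)^{(+)}$. By the remark preceding this lemma, the associative subalgebra $A_0$ that $\pi(J)$ generates in $A(J)/P$ is therefore finite dimensional, and hence closed.

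The algebra $A(J)$ is the closure of the associative $\mathbb{R}$-subalgebra generated by $J$, and $\pi$ is continuous. So $\pi(A(J))$ is contained in the closure of $A_0$, which is $A_0$ itself. Since $\pi$ is surjective, $A(J)/P=A_0$ is finite dimensional. This shows that $A(J)$ is just-infinite.

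The main obstacle is the step $J\cap P\neq0$. It rests on the primeness of $A(J)$ established in Lemma~\ref{lm11} and on $T\neq0$ via Lemma~\ref{lm9}(iii). One must check that $\overline{T}$ computed inside $A(J)$, i.e. after passing to $B/K_1$, is still nonzero; this holds because $J$ embeds injectively, as $J\cap K_1=0$. A secondary point is the finite-generation remark, which needs the subalgebra generated by a finite-dimensional special Jordan algebra to be finite dimensional. This is the cited fact from the paper, and the argument uses it only in the quotient $A(J)/P$.
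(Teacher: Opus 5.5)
Your proof is correct, and it takes a more direct route than the paper's. The paper never works with $A(J)/P$ directly. It uses Theorem~\ref{th1} to reduce the claim to just-infiniteness of the Jordan algebra $H(A(J),*)$, and writes a closed ideal of $H(A(J),*)$ as $H(P,*)$ via Lemma~\ref{lm3}. It then applies the same three ingredients you use to $H(\langle J\rangle,*)$ inside $H(A(J),*)$:
\begin{enumerate}
\item $J\cap P\neq 0$, from primeness of $A(J)$ and Lemma~\ref{lm9}(iii);
\item finite-dimensionality of the algebra generated by the image of $J$ modulo $P$;
\item the density--continuity--closedness argument.
\end{enumerate}

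You run the density argument on $\pi(\langle J\rangle)$ in $A(J)/P$ itself, so neither Lemma~\ref{lm3} nor Theorem~\ref{th1} is needed here. That is a real saving. The direction of Theorem~\ref{th1} the paper needs passes through Corollary~\ref{co2}, Lemma~\ref{lm6-II}, Remark~\ref{rm1}, the structure of finite-dimensional semisimple Jordan algebras, and Amitsur's theorem. Your argument needs only the finite-generation remark and elementary topology. The paper's detour stays in Jordan language and gives just-infiniteness of $H(A(J),*)$ as a by-product, but the statement does not require that.

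Your reduction to self-adjoint $P$ and the appeal to Corollary~\ref{co2} are unnecessary. You only use that $A(J)/P$ is a normed space, so that the finite-dimensional $A_0$ is closed. This holds for any closed ideal $P$, and Lemma~\ref{lm9}(iii) applies to arbitrary ideals.
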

	
	\Proof 
	By Theorem~\ref{th1}, it suffices to prove that 
	the algebra $H(A(J),*)$ is just-infinite. 
	Let $I$ be a norm-closed ideal of $H(A(J),*)$. 
	Then by Lemma~\ref{lm3}, $I=H(P,*)$, 
	where $P$ is a norm-closed ideal of $A(J)$. 
	We also have $H(A(J),*) /I\cong H(A/P,*)$.
	
	Let $\langle J\rangle $ be a $\mathbb{R}$-subalgebra 
	of $B(J)$, generated by $J$.
	By Lemma~\ref{lm11}, 
	$A(J)$ is the closure of $\langle J\rangle$.
	Then it is clear that $H(A(J),*)$ is the closure of $H(\langle J\rangle,*)$.
	Moreover, $A(J)$ is prime. 
	Then by Lemma~\ref{lm9}, $J\cap P\neq 0$.
	Therefore, $J\cap I=J\cap P\neq 0$.
	
	Consider the quotient algebra $A(J)/P$. 
	Then the subalgebra $\langle J/P\rangle$ of $A(J)/P$ 
	is generated by $J/P$.
	Since $J/P \cong J/(J \cap P)$ and $J$ is just-infinite, we obtain that $J/P$ is finite-dimensional.
	Therefore, $\langle J/P \rangle $ is  a finite-dimensional algebra as well.
	Consequently, $H(\langle J\rangle,*)/P$ 
	is finite-dimensional, since
	$H(\langle J\rangle,*)/P\cong H(\langle J/P \rangle ,*)$.
	We have 
	$$H(\langle J\rangle,*)/I\cong H(\langle J\rangle,*)/ H(\langle J\rangle,*)\cap I=
	H(\langle J\rangle,*)/H(\langle J\rangle,*)\cap P\cong H(\langle J\rangle,*)/P.$$ 
	Hence, $H(\langle J\rangle,*)/I$ is finite-dimensional.
	
	The canonical homomorphism 
	$\pi :H(A(J),*)\mapsto H(A(J),*)/I$ is continuous.  Consider an element $\tilde{h}\in\pi(H(A(J),*))$. 
	Then $\tilde{h}=\pi(h)$ for some $h\in H(A(J),*)$.
	Since $H(A(J),*)$ is the closure of 
	$H(\langle J\rangle,*)$ 
	then there exist a sequence $\{h_n\}$ 
	of the elements of $H(\langle J\rangle,*)$ 
	such that $h_n\rightarrow h$.
	Consequently, $\pi(h_n)\rightarrow \pi(h)$ 
	since $\pi$ is continuous. 
	As $\pi(h_n)\in H(\langle J\rangle,*)/I$  and 
	$H(\langle J\rangle,*)/I$ is finite-dimensional, $H(\langle J\rangle,*)/I$ is norm-closed. 
	Therefore, $\tilde{h}=\pi(h)\in H(\langle J\rangle,*)/I$, that is, $H(A(J),*)/I=H(\langle J\rangle,*)/I$. 
	It means that $H(A(J),*)/I$ is finite-dimensional.
	
	Thus, we proved that $H(A(J),*)$ is just-infinite.\qed
	
	{\bf Proof of Theorem~\ref{th4}} 
	Being a $JB$-algebra 
	$J$ is a non-degenerate Jordan algebra.
	By Theorem~\ref{th2}, 
	$J$ is prime, since $J$ is a just-infinite algebra. By Theorem \ref{th3}, $J$ is the algebra of cases 1)-4).
	
	The center $Z(J)$ of the $JB$-algebra $J$ 
	is an associative commutative Banach $\mathbb{R}$-algebra.
	Therefore, if  $Z(J) \neq 0$, then $Z(J) = \mathbb{R}$. Otherwise, $Z(J)$ would contain zero divisors, which is not possible since $J$ is a prime algebra.
	
	If $J$ is the Albert ring, then 
	$Z(J) = \mathbb{R}$ and $J$ is a $27$-dimensional algebra over $\mathbb{R}$. It is not possible since 
	$J$ is an infinite-dimensional algebra over $\mathbb{R}$.
	
	If $J$ is a special Jordan algebra 
	then $J$ is $JC$-algebra (see \cite{AlShSt,  Han-OlSt}), i.e.
	$J$ is a norm-closed subalgebra of $B(\mathcal{H})_{sa}$
	for some complex Hilbert space $\mathcal{H}$.
	
	Assume $J$ is a central order in a Jordan algebra 
	with a non-degenerate bilinear form, 
	then the center $Z(J)$ is  the real numbers $\mathbb{R}$ again, and 
	$J$ itself is an infinite-dimensional Jordan algebra 
	with a non-degenerate bilinear form. 
	Since $J$ is a $JB$-algebra, then $J$ is formally real, i.e.
	$\sum_{k=1}^na_k^2=0$ implies $a_k=0$ for $k=1,\ldots,n$. Therefore,
	the bilinear form on $J$ is positive definite. 
	Then $J$ is a $JB$-factor of type $I_2$ (see \cite{AlShSt}).
	Consequently, $J$ is an infinite-dimensional spin factor (see \cite{AlShSt}).
	
	Assume that $J$ contains a non-zero ideal $I$,
	which is isomorphic to an algebra $A^ {(+)}$, 
	where $A$ is an associative algebra. 
	By Lemma~\ref{lm7}, we have $I^2\subseteq Z(J)$. 
	Then $Z(J)\neq 0$ since $I^2\neq 0$. 
	It implies that $Z(J)=\mathbb{R}$. 
	Consequently, $J=I=\mathbb{R}$. 
	Therefore, $\dim_\mathbb{R} J = 1$. 
	This is a contradiction because $J$ has infinite dimension over $\mathbb{R}$.
	
	Assume that $J$ contains a non-zero ideal $I$, 
	which is isomorphic to an algebra $H(A_0,j)$, 
	where $A_0$ is an associative algebra with
	an involution $j$. 
	In this case, the ideal $T$ is non-zero. 
	By Lemma~ \ref{lm11}, there exists a $C^*$-algebra $B(J)$ and a  $\mathbb{R}$-subalgebra $A(J)$ of $B(J)$ 
	that is the closure of the
	$\mathbb{R}$-subalgebra $\langle J\rangle$, generated by $J$ such that $$J\subseteq H(A(J),*)\subseteq H(B(J),*).$$
	Then $H(A(J),*)$ is the closure of 
	$H(\langle J\rangle ,*)$. 
	By repeating the proof of Lemma~\ref{lm8}, 
	we see that the closure $\overline{T}$ of $T$ 
	is an ideal in $H(A(J),*)$. 
	By Lemma~\ref{lm12}, $A(J)$ is just-infinite.
	
	Put $A=B(J)$ and $A_2=A(J)$.  By Lemma \ref{lm11}, $A$ is the closure of $A_2+iA_2$. 
	By Lemma~\ref{lm3}, $\overline{T}=H(A_1,*)$,
	where $A_1$ is a self-adjoint norm-closed ideal of $A_2$.
	
	We claim that any closed ideal of $\overline{T}$ 
	is also an ideal of $J$. Indeed, 
	assume that $P$ is a closed ideal of $\overline{T}$. 
	As $P$ is a $JB$-algebra, for any $h\in P$ we have 
	$h=h_+-h_-$, 
	where $h_+$ and $h_-$ are the positive elements of $P$. 
	Then, we can extract the root of any degree from 
	$h_+$ and $h_-$ in the $JB$-algebra $P$. 
	Therefore, $P^3=P$. 
	Consequently, by (\ref{Jord-Id}), $P$ is an ideal of $J$. 
	This implies that $\overline{T}$ is just-infinite. Therefore, by Theorem \ref{th1}, the algebra $A_1$ is just-infinite.
	Theorem~\ref{th4} is proven.
	\qed
	
	\begin{center}
		{\bf  Acknowledgments}\end{center}
	The research was carried out within the framework of the Sobolev
	Institute of Mathematics state
	contract (project FWNF-2026-0017).

	\Addresses

\begin{thebibliography}{99}
		
		\bibitem{FS} 
		D. R. Farkas and L. W. Small, 
		“Algebras which are nearly finite dimensional 
		and their identities”, 
		Isr. J. Math., 127, 245--251 (2002).
		
		\bibitem{PT} 
		D. S. Passman and W. V. Temple, 
		“Representations of the Gupta–Sidki group”, 
		Proc. Am. Math. Sos., 124:5, 1403--1410 (1996).
		
		\bibitem{RRZh} 
		Z. Reichstein, D. Rogalski, J. J. Zhang, 
		“Projectively simple rings”, 
		Adv. Math., 203, 365--407 (2006).
		
		\bibitem{FarPen} 
		J.Farina and C. Pendergrass-Rice, 
		“A few properties of just infinite algebras”,
		Commun. Algebra, 35, 1703--1707 (2007).
		
		\bibitem{BellFarPen} 
		J.Bell, J. Farina, C. Pendergrass-Rice, 
		“Stably just infinite rings”,
		J. Algebra, 319, 2533--2544 (2008).
		
		\bibitem{ZhP} 
		V. N. Zhelyabin, A. S. Panasenko, 
		“Nearly finite-dimensional Jordan algebras”, 
		Algebra and Logic, 57:5, 336--352 (2018). 
		
		\bibitem{ZhPan} 
		V.N. Zhelyabin, A.S. Panasenko, 
		“Herstein's construction for just infinite superalgebras”, 
		Sib. electronic math. reports, 14, 1317--1323 (2017).
		
		\bibitem{ShalZel} 
		A. Shalev and E. I. Zelmanov, 
		“Narrow Lie algebras: A coclass theory and a characterization of the Witt algebra”, 
		J. Algebra, 189, 294--331 (1997).
		
		\bibitem{GMS} 
		N. Gavioli, V. Monti and C. M. Scoppola, 
		“Just infinite periodic Lie algebras”.
		In {\it Finite Groups 2003}, 
		Walter de Gruyter, Berlin. 73--85 (2004).
		
		\bibitem{Wilson} 
		J. S. Wilson, 
		“Groups with every proper quotient finite”, 
		Proc. Cambridge Philos. Soc., 69:3, 373--391 (1971).
		
		\bibitem{GupSid} N. Gupta, S. Sidki, 
		“Some infinite p-groups”,
		Algebra i Logika, 22:5, 584--589 (1983).
		
		\bibitem{GrigShum} R. Grigorchuk and P. Shumyatsky, 
		“On just-infinite periodic locally soluble groups”, 
		Arch. Math., 109, 19--27 (2017).
		
		\bibitem{GMR} R. Grigorchuk, M. Musat and M. R${\o}$rdam,
		“Just-infinite $C^*$-algebras”,
		Comment. Math. Helv., 93, 157--201 (2018).
		
		\bibitem{AlShSt} 
		Erik M. Alfesn, Frederic W. Shultz and Erling  St{\o}rmer,
		“A Gelfand-Neumark theorem for Jordan algebras”,
		Adv. Math., 28, 11--56 (1978).
		
		\bibitem{EffSt} 
		E. Effros and E. St{\o}rmer, 
		“Jordan algebras of self-adjoint operators”, 
		Trans. Amer. Math. Soc., 127, 313--316 (1967).
		
		\bibitem{Stor} 
		E. St{\o}rmer, 
		“On the Jordan structure of $C^*$-algebras”, 
		Trans. Amer. Math. Soc., 120, 438--447 (1965).
		
		\bibitem{Stor-I} 
		E. St{\o}rmer, 
		“Jordan algebras of type I”,
		Acta Math., 115, 165--184 (1966).
		
		\bibitem{Stor-II} 
		E. St{\o}rmer, 
		“Irreducible Jordan algebras of self-adjoint operators”, Trans. Amer. Math. Soc., 130, 153--166 (1968).
		
		\bibitem{Top} 
		D. Topping, 
		“Jordan algebras of self-adjoint operators”, 
		Mem. Amer. Math. Soc., 53 (1965).
		
		\bibitem{Top-I} 
		D. Topping, 
		“An isomorphism invariant for spin factors”, 
		J. Math. Mech., 15, 1055--1064 (1966).
		
		\bibitem{Dixmier} 
		J. Dixmier, 
		“$C^*$-Algebras”, North-Holland, (1977).
		
		\bibitem{KMcC} 
		Kevin McCrimmon, 
		“On Herstein's theorems relating Jordan and 
		associative algebras”, 
		J. Algebra, 13, 382--392 (1969).
		
		\bibitem{ZHSlShShir}  K. A. Zhevlakov, A. M. Slin’ko, I. P. Shestakov and A. I. Shirshov, 
		"Rings that are nearly associative" [in Russian], Nauka, Moscow (1978).
		
		\bibitem{Zelmanov1983} 
		E.I. Zel'manov, 
		“Prime Jordan algebras. II”, 
		Siberian Math. J., 24:1, 73--85 (1983).
		
		\bibitem{Han-OlSt} 
		H. Hanche-Olsen and E. St{\o}rmer, 
		“Jordan operator algebras”, 
		Monographs and Studies in Mathematics, 21. 
		Pitman (Advanced Publishing Program), Boston, MA, 1984.
		
		\bibitem{Amitsur} 
		S.A. Amitsur,  
		“Rings with involution”, 
		Isr. J. Math., 6, 99--106 (1968).
	\end{thebibliography}
\end{document}